\documentclass[11pt,a4paper]{amsart}
\usepackage[utf8]{inputenc}
\usepackage{amsmath}
\usepackage{amsfonts}
\usepackage{amsmath}
\usepackage{amsthm}
\usepackage{amssymb}
\usepackage{graphicx}
\usepackage[shortlabels]{enumitem}
\usepackage{float}
\usepackage{url}
\usepackage{hyperref}		
\usepackage{yfonts}   
\usepackage[sorting=nty,style=alphabetic]{biblatex}
\addbibresource{references.bib}
\newtheorem{thm}{Theorem}[section]

\newtheorem{lm}[thm]{Lemma}

\theoremstyle{definition}
\newtheorem{df}[thm]{Definition}

\theoremstyle{remark}
\newtheorem{rem}[thm]{Remark}
\newtheorem{clm}[thm]{Claim}

\makeatletter
\newcommand{\xhookdoubleheadrightarrow}[2][]{%
  \lhook\joinrel
  \ext@arrow 0359\rightarrowfill@ {#1}{#2}%
  \mathrel{\mspace{-15mu}}\rightarrow
}
\makeatother
\usepackage[a4paper, left = 2.5cm, right = 2.5cm, top = 2.5cm, bottom = 2.5cm, headsep = 1.2cm]{geometry}
\author{Karol Duda}
\title{Hall's Harem Theorem with controlled sizes of cycles}
\date{}
\begin{document}
\begin{abstract} 
We prove a new version of Hall's Harem theorem, where the final matching is realized by a unary  function with additional conditions on the behavior of cycles. 
\end{abstract}

\maketitle

\section{Introduction} 
The Hall Harem theorem describes a condition which is equivalent to existence of a perfect $(1,k)$-matching of a bipartite graph, for example see Section III.3 of \cite{BB}, Theorem 2.4.2. in \cite{1986C1} or  Theorem H.4.2. in \cite {csc}. 
Assuming that such a graph is of the form $(\mathbb{N},\mathbb{N}, E)$, where $E$ denotes the set of edges between natural numbers, the matching clearly realizes a $k$ to $1$ function, say $f: \mathbb{N} \rightarrow \mathbb{N}$. 
In this paper we study properties which can be 
additionally added to such a function. 
In particular we are especially interested in fast reaching cycles of $f$. 
This will be formalized below as the property of {\em controlled sizes of cycles}. 

The original motivation to such investigation comes from computable amenability, a topic which recently appeared in papers of M. Cavaleri \cite{Cav1, Cav2}, N. Moryakov \cite{mor} and the author (together with A. Ivanov), see \cite{DuI, CPD}. 
%
%
In order to obtain a computable version of Schneider's theorem on existence of $d$-regular forests in non-amenable coarse spaces \cite{schndr} some special form of Hall's harem theorem is required.
In fact we need a matching which is realized by a computable subset of $\mathbb{N}^2$ with the additional property that there is an algorithm which decides whether two natural numbers $m$ and $n$ belong to the same connected component of $f$.  
The property of controlled sizes of cycles which was mentioned above guarantees existence of such an algorithm for computable functions.

However, in this paper we avoid computability issues as much as it is possible. 
There are two reasons for this. 
The first one is the author's conviction that a classical version of Hall's harem theorem with additional restrictions on $f$ is desirable.  
The second reason is the fact that the task of computability of $f$ makes our proofs enormously complicated. 
We have to use very special arguments, which develop the method of Kierstead \cite{hak} and the further modification of it given in \cite{CPD}. 
Thus, the essence of the proof is not easily available. 
The present paper can be considered as a helpful companion of \cite{AC1}, where the computable  version of the main theorem of the present paper is posted. 
Furthermore, it is worth noting here that these two versions of Hall's harem theorem are independent: none of them follows from the other one. 

The formulation of the classical version of Hall's harem theorem with controlled sizes of cycles is given in Theorem \ref{hhco} of Section 2.  
Sections 3 - 6 are devoted to the proof. 

\section{Preliminaries}

\subsection{General preliminaries}
To introduce the reader to the subject we recall Hall's Harem theorem. 
We begin with some necessary definitions from graph theory. 
We mostly follow the notation of \cite {csc}. 
A graph $\Gamma=(\mathbf{\Gamma},E)$ is called a \textit{bipartite graph} if the set of vertices $\mathbf{\Gamma}$ is partitioned into sets $U$ and $V$ in such a way, that the set of edges $E$ is a subset of $U\times V$. 
We denote such a bipartite graph by $\Gamma=(U,V,E)$.

From now on we concentrate on bipartite graphs.  
Note that although the definitions below concern this case, they usually have obvious extensions to all ordinary graphs. 

A subgraph of $\Gamma$ is a triple $\Gamma' = (U',V',E')$ with  $U'\subseteq U$, $V'\subseteq V$, $E' \subseteq E$. 
When $\Gamma'$ is such a subgraph but only the sets of its vertices are specified (i.e. for example $\Gamma'=(U',V')$), this means that $E' = E \cap (U'\times V')$. 
Then we say that the subgraph $\Gamma'$ is induced in $\Gamma$ by the set of vertices. 

Let $\Gamma=(U,V,E)$. 
We will say that an edge $(u,v)$ is \textit{incident} to vertices $u$ and $v$. In this case we say that $u$ and $v$ are \textit{adjacent}. 
When two edges $(u,v),(u',v')\in E$ have a common incident vertex we say that $(u,v),(u',v')$ are also \textit{adjacent}. 
A sequence $x_1,x_2,\ldots, x_n$ is called a \textit{path}, if each pair $x_i,x_{i+1}$ is adjacent, $1\leq i< n$.

Below we will denote the set of vertices $\mathbf{\Gamma}$ by the same letter with the graph as a structure, i.e. $\Gamma$. 
Given a vertex $x\in \Gamma$, the \textit{neighborhood} of $x$ is a set 
$$
N _{\Gamma}(x)=\{y\in \Gamma : (x,y)\in E\}.
$$  
For subsets $X\subset U$ and $Y\subset V$, we define the neighborhood 
$N _{\Gamma}(X)$ of $X$ and the neighborhood $N _{\Gamma}(Y)$ of $Y$ by 
\[ 
N _{\Gamma}(X)=\bigcup\limits_{x\in X} N _{\Gamma}(x) \, \, \text{ and } \,  \, N _{\Gamma}(Y)=\bigcup\limits_{y\in Y} N _{\Gamma}(y).
\]  
We drop the subscript $\Gamma$ if it is clear from the context.

\begin{df}\label{connected}
A subset $X$ of $U$ (resp. of $V$) is called \textit{connected} if for all $x,x'\in X$ there exist a path $x=p_0,p_1,\ldots ,p_k=x'$ in $\Gamma$ such that $p_i\in X\cup N_{\Gamma}(X)$ for all $i\leq k$.
\end{df} 
Note here that this definition concerns only bipartite graphs.
This notion belongs to Kierstead, \cite{hak}.
In the case of connected ordinary graphs we take standard definitions.   

For a given vertex $v$, the \textit{star} of $v$ is the subgraph  
$S=(\{v\}\cup N_{\Gamma}(v),E')$ of $\Gamma$, with \[ 
E'=((\{v\}\cup N_{\Gamma}(v))\times (\{v\}\cup N_{\Gamma}(v)))\cap E. 
\]
A $(1,k)$-\textit{fan} is a subset of $E$ consisting of $k$ edges incident to some vertex $u\in U$. 
We say that $u$ is the \textit{root} of the fan, and when $(u,v)$ belongs to the fan we call $v$ a \textit{leaf} of it.

\begin{df}
\begin{itemize} 
\item An $(1,k)$-\textit{matching} from $U$ to $V$ is a collection $M$ of pairwise disjoint $(1,k)$-fans. 
\item The $(1,k)$-matching $M$ is called \textit{left perfect} (resp. \textit{right perfect}) if each vertex from $U$ is a root of a fan from $M$ (resp.  each vertex from $V$ belongs to exactly one fan of $M$).  
\item The $(1,k)$-matching $M$ is called \textit{perfect} if it is both left and right perfect.
\end{itemize} 
\end{df} 
We often view an $(1,k)$-matching as a bipartite graph $M$ where the fan of $u\in U$ is the $M$-\textit{star} of $u$, i.e. the star of $u$ in the subgraph $M$. 
We emphasize that a perfect $(1,k)$-matching from $U$ to $V$ is a set $M\subset E$ satisfying following conditions: 
\begin{enumerate}[label={(\arabic*)}]
\item for each vertex $u \in U$ there exists exactly $k$ vertices $v_1,\ldots, v_k \in V$ such that \\ 
$(u,v_1),\ldots,(u,v_k)\in M$; 
\item for all $v \in V$ there is a unique vertex $u\in U$ such that $(u,v)\in M$.
\end{enumerate} 

Originally Hall's Marriage Theorem (see e.g. \cite[Theorem 2.1.2]{diestel}, \cite[Section III.3]{BB}, \cite[Theorem 2.4.2.]{1986C1}) provides us a condition for existence of left perfect $(1,1)$-matching in a finite bipartite graph. 
We are interested in so called \textit{Hall's Harem theorem}, which is a generalization of Hall's Marriage theorem to the case of perfect $(1,k)$-matchings for the locally finite infinite graphs. 

One says that graph $\Gamma=(V,E)$ is \textit{locally finite}, if for all vertices $x\in \Gamma$, the neighborhood $N(x)$ is finite. Note that if $\Gamma$ is locally finite, then $N(X)$ is finite for any finite $X\subset V$.

\begin{thm}[Hall's Harem theorem]\cite[Theorem H.4.2.]{csc}
Let $\Gamma=(U,V,E)$ be a locally finite graph and let $k\in \mathbb{N},\; k\geq 1$. The following conditions are equivalent: 
\begin{enumerate}[label={(\roman*)}]
\item For all finite subsets $X\subset U$, $Y\subset V$ the following inequalities holds: $|N(X)|\geq k|X|$, $|N(Y)|\geq \frac{1}{k}|Y|$.
\item $\Gamma$ has a perfect $(1,k)$-matching.
\end{enumerate} 
\end{thm}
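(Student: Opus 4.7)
My plan is to reduce Hall's Harem theorem to the ordinary (infinite) Hall Marriage theorem for locally finite bipartite graphs, via a standard vertex-duplication trick, and to verify the hypotheses of the marriage theorem carefully on both sides.

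\textbf{Necessity (ii)$\Rightarrow$(i).} Suppose $M$ is a perfect $(1,k)$-matching. For a finite $X\subseteq U$, the $k|X|$ leaves of the fans rooted at $X$ are pairwise distinct (fans are disjoint) and lie in $N(X)$, so $|N(X)|\ge k|X|$. For a finite $Y\subseteq V$, each $y\in Y$ is matched into a fan whose root belongs to $N(Y)$, and since each root accounts for at most $k$ such leaves, the number of roots involved is at least $|Y|/k$, giving $|N(Y)|\ge |Y|/k$. This direction is routine.

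\textbf{Sufficiency (i)$\Rightarrow$(ii).} The idea is to build an auxiliary locally finite bipartite graph $\Gamma'=(U',V,E')$ by replacing each $u\in U$ with $k$ clones $u^{(1)},\dots,u^{(k)}$, and setting $(u^{(i)},v)\in E'$ iff $(u,v)\in E$. A perfect $(1,1)$-matching in $\Gamma'$ pulls back to a perfect $(1,k)$-matching in $\Gamma$: the $k$ clones of each $u$ determine exactly $k$ distinct partners in $V$, and right-perfection in $\Gamma'$ becomes right-perfection of the fan decomposition in $\Gamma$. Thus it suffices to verify the two-sided Hall condition for $\Gamma'$. Given a finite $X'\subseteq U'$ with underlying set $X=\{u\in U : u^{(i)}\in X'\text{ for some }i\}$, we have $|X'|\le k|X|$ and $N_{\Gamma'}(X')=N_\Gamma(X)$, so the hypothesis $|N_\Gamma(X)|\ge k|X|$ yields $|N_{\Gamma'}(X')|\ge |X'|$. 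Given a finite $Y\subseteq V$, we have $N_{\Gamma'}(Y)=\{u^{(i)} : u\in N_\Gamma(Y),\, 1\le i\le k\}$, so $|N_{\Gamma'}(Y)|=k|N_\Gamma(Y)|\ge k\cdot\frac{1}{k}|Y|=|Y|$.

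The remaining ingredient is the infinite version of Hall's Marriage theorem for locally finite bipartite graphs, under a two-sided Hall condition. My plan is to obtain this by compactness. Enumerate the vertices of $\Gamma'$; for each $n$, let $\Gamma'_n$ be the finite subgraph induced by the first $n$ vertices together with all their neighbors. Using the finite defect form of Hall's theorem, choose in $\Gamma'_n$ a matching saturating a large finite portion, and form the tree $T$ whose level-$n$ nodes are the finitely many partial matchings on the first $n$ clones of $U$ that extend to such approximations. Since $\Gamma'$ is locally finite, $T$ is finitely branching; König's lemma furnishes an infinite branch, i.e.\ a matching saturating all of $U'$. The two-sided condition then forces right-perfection: any unmatched $v\in V$ would, via an alternating-path argument in $\Gamma'$, contradict $|N_{\Gamma'}(Y)|\ge |Y|$ for a suitable finite $Y\subseteq V$ built from the alternating tree rooted at $v$.

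\textbf{Main obstacle.} The delicate point is the last step: ensuring that the matching produced is right-perfect, not merely left-perfect. Unlike the finite case, one-sided Hall is insufficient for a countable graph, and the standard alternating-path / deficiency argument must be executed with care in the locally finite setting. This is precisely where the second inequality $|N(Y)|\ge |Y|/k$ is used; handling it cleanly, while remaining within the compactness framework, is the technical heart of the proof.
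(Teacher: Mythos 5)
First, a remark on scope: the paper does not prove this statement at all --- it quotes Hall's Harem theorem from \cite{csc} as a known result --- so your attempt can only be measured against the standard proof in the literature, which indeed proceeds by exactly your vertex-duplication reduction. Your necessity direction and the verification that the cloned graph $\Gamma'$ is locally finite and satisfies the two-sided $(1,1)$-Hall condition are correct, as is the observation that a perfect $(1,1)$-matching of $\Gamma'$ pulls back to a perfect $(1,k)$-matching of $\Gamma$.

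The genuine gap is in your final step, the two-sided infinite marriage theorem itself. Your compactness/K\"onig's-lemma argument produces a matching saturating $U'$, but your claim that an unmatched $v\in V$ then contradicts $|N_{\Gamma'}(Y)|\geq |Y|$ ``for a suitable finite $Y$ built from the alternating tree rooted at $v$'' fails: in a locally finite infinite graph the alternating tree rooted at $v$ may be infinite, so it need not yield any \emph{finite} violating set $Y$, and condition (i) only constrains finite sets. Worse, no contradiction is available in principle, because a left-perfect matching genuinely need not be right-perfect even when the two-sided condition holds. Take $U=\{u_i\}_{i\geq 0}$, $V=\{v_i\}_{i\geq 0}$ with edges $(u_i,v_i)$ and $(u_i,v_{i+1})$: the two-sided condition holds, yet $M=\{(u_i,v_{i+1})\}_{i\geq 0}$ saturates $U$ and misses $v_0$, and the alternating tree at $v_0$ is the whole (infinite) graph. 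The standard repair, which is what \cite{csc} does, is to use compactness twice --- once to get a matching $M_U$ saturating $U'$ from the $U$-side condition, once to get a matching $M_V$ saturating $V$ from the $V$-side condition --- and then run the Cantor--Schr\"oder--Bernstein argument on $M_U\cup M_V$: its connected components are cycles and (possibly infinite) alternating paths, and in each component one can select a sub-matching saturating every vertex of that component. If you replace your alternating-tree step by this combination argument (or, alternatively, strengthen the tree $T$ so that level-$n$ nodes are required to saturate initial segments of \emph{both} sides, justified by a finite defect argument), the proof goes through.
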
 

The first condition in this formulation is known as \textit{Hall's $k$-harem condition}. 

It is a crucial fact that the theorem  holds for locally finite infinite graphs.
At this point we inform the reader that the statement of the Hall Marriage Theorem does not work for infinite graphs in general (see e.g. \cite[S.2 Ex.6]{diestel}).  Nevertheless, there are versions of this theorem for graphs of any cardinality \cite{ahr}. 

\subsection{Reflections} 
Throughout the paper, $d$ is a natural number greater than $1$.
When $\Gamma=(U,V,E)$ is a bipartite graph, we always assume that $V\subseteq U\subseteq\mathbb{N}$, i.e. $V$ is a subset of the right copy of $U$. 

The following notation substantially simplifies the presentation.
For every vertex $v\in V$ there exist a vertex from $U$ which is a {\em copy} of $v$ (i.e. the same natural number), we denote it by $u_v$.  
If a vertex $u\in U$ has the copy in $V$ then we denote this copy by $v_u$. 

\begin{df} 
The graph $\Gamma=(U,V,E)$ is called $U$-\textit{reflected} if $V$ is a subset of the right copy of $U$ and for every edge $(u,v)\in E$ with $v_{u}\in V$ the edge $(u_{v},v_{u})$ is also in $E$. 
If additionally $V$ is the right copy of $U$, then $\Gamma$ is called a \textit{fully reflected} bipartite graph. 
\end{df}
\noindent 
It is worth noting that when $\Gamma' = (U',V')$ is an induced subgraph of an $U$-reflected  $\Gamma=(U,V,E)$ such that $V'$ is a subset of the right copy of $U'$ then $\Gamma'$ is $U'$-reflected.

The main theorem below states that in the case of fully reflected bipartite graphs the Hall's harem condition allows us to force some additional properties at the expense of obtaining a perfect $(1,(d-1))$-matching instead of a $(1,d)$-matching. 
We will now give necessary details.  

\subsection{Controlled sizes of cycles. Main theorem}
Let $f$ be a function. If for some $i\neq 0$ we have $f^i(u)=u$ then we will say that $u$ is a \textit{periodic point} of $f$. 
For such $u$ and the smallest $i\neq 0$ with $f^i(u)=u$, we say that $\{u,f(u),\ldots, f^{i-1}(u)\}$ is a {\em cycle} of $f$.
Any $(1,(d-1))$ matching can be considered as a $(d-1)$ to $1$ function $f:\mathbb{N}\rightarrow \mathbb{N}$. Moreover, if this matching is perfect, such a function $f$ is total and surjective.
We roughly want to show that given a fully reflected bipartite graph satisfying Hall's $d$-harem condition, there is a perfect $(1,(d-1))$ matching $f:\mathbb{N}\rightarrow\mathbb{N}$, such that for each $u$ there exist $i\geq 0$ such that $f^i(u)$ is a periodic point.

\begin{df}\label{cycles}
Let $f:\mathbb{N}\rightarrow \mathbb{N}$ be a $(d-1)$ to $1$ function. 
We say that $f$ has \textit{controlled sizes of its cycles} if each of the following conditions holds:  
\begin{enumerate}[(i)]
\item $f^2(1)=1$;
\item if $n\geq 2$ and $f^i(n)=n$ then $i\leq n $;
\item if $n\geq 2$ and for all $i\leq n$ we have $f^i(n)\neq n$ then there exist $k\leq 2n$ and $l\leq n$ such that $f^{k+l}(n)=f^k(n)$.
\end{enumerate}
\end{df}  
The following theorem is the main result of the paper. 

\begin{thm}\label{hhco} (Main Theorem) 
Let $\Gamma=(U,V,E)$ be a locally finite bipartite graph such that: 
\begin{itemize}
\item both $U$ and $V$ are identified with $\mathbb{N}\setminus\{0\}$, 
\item $E$ does not contain edges of the form $(u,v_u)$,  
\item  $\Gamma$ is fully reflected,  
\item $\Gamma$ satisfies Hall's $d$-harem condition.
\end{itemize}
Then there exists a perfect $(1,d-1)$-matching of $\Gamma$, which realizes a $(d-1)$ to $1$ function $f:\mathbb{N}\rightarrow \mathbb{N}$ with controlled sizes of its cycles.
\end{thm}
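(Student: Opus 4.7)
The plan is to construct the function $f$ (equivalently, the perfect $(1,d-1)$-matching $M$) by a stage-by-stage finite extension, processing the vertices of $V$ in increasing order. After stage $n$ I intend to have a finite partial matching $M_n\subseteq E$ such that: (a) $f(m)$ is determined for every $1\le m\le n$; (b) conditions (i)--(iii) of Definition \ref{cycles} are already witnessed for every $m\le n$; (c) the residual bipartite graph, obtained from $\Gamma$ by deleting saturated leaves and decrementing the capacity of partially filled fans, still satisfies an appropriate Hall $d$-harem condition. The preservation of (c) is the engine driving the whole construction, and the slack it relies on is precisely the gap between the $d$-harem hypothesis of the input and the $(1,d-1)$-matching we are building: each fan has one unit of room available for forced cycle-closing moves.

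At stage $1$, Hall's $d$-harem condition applied to $\{1\}$ yields $|N(1)|\ge d\ge 2$, and since $(1,v_1)\notin E$ every neighbor of $1$ is different from $1$ itself. I would pick an appropriate $u_1\in N(1)$ and commit both reflected edges $(u_1,1)$ and $(1,u_1)$ to $M_1$, producing the $2$-cycle $1\to u_1\to 1$ required by condition (i). The choice of $u_1$ must be made so that invariant (c) survives; existence of such a choice follows from a Kierstead-type extension lemma using alternating paths in a finite neighborhood of $1$, analogous to what appears in \cite{hak, CPD}.

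At stage $n\ge 2$, if the orbit of $n$ under the partial function derived from $M_{n-1}$ already reaches a cycle of length $\le n$ within $\le 2n$ steps, I would set $M_n=M_{n-1}$. Otherwise, I would extend $M_{n-1}$ along the forward orbit of $n$, one edge at a time, and at some carefully chosen depth $k\le 2n$ I would commit an edge that forces $f(f^k(n))=f^j(n)$ for an earlier orbit point with $1\le k-j\le n$, closing a short cycle. Existence of such a closing edge together with preservation of (c) is the main technical burden of the stage. It is to be established by a second extension lemma exploiting both the symmetry of the fully reflected structure (which provides matched pairs of edges that can be swapped simultaneously) and the unused degree left over by the $(1,d-1)$-matching inside the $d$-harem bound.

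The principal obstacle, and the reason the proof spans four sections of the paper, is that each forced cycle-closing commitment aggressively shrinks the neighborhood structure of the residual graph, and proving that Hall's $d$-harem condition nevertheless survives requires intricate alternating-path swaps that depend essentially on full reflection. Once all stages are completed, $M=\bigcup_n M_n$ is a perfect $(1,d-1)$-matching: left-perfectness holds because every fan is eventually saturated, right-perfectness because $f(n)$ is explicitly set at or before stage $n$, and the controlled sizes of cycles hold for $f$ by construction.
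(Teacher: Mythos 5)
Your overall architecture (greedy stage-by-stage extension in increasing order of vertices, exploiting the slack between the $d$-harem hypothesis and the $(1,d-1)$-matching being built, and using full reflection to force short cycles via pairs $(u,v)$, $(u_v,v_u)$) matches the paper's strategy. But there is a genuine gap in your invariant (c), and it is precisely the point where the paper's machinery lives. You assume that after each stage the residual graph can be kept satisfying ``an appropriate Hall $d$-harem condition.'' This is not maintainable: removing a $(1,d-1)$-fan from a graph that only satisfies the $d$-harem condition can leave a residual graph that violates it, and no choice of fan avoids this in general. The paper confronts this head-on: Claims \ref{cs1p2part1} and \ref{cs1p2} only guarantee that \emph{either} the residual graph $\Gamma^{(n+1)}$ \emph{or} the graph obtained by excising one or two further exceptional $(1,d-1)$-fans satisfies the condition. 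Those excised fans are parked in a growing exceptional set $\Gamma^{(n)\perp}$ (at most one new root per step, so $|U^{(n)\perp}|\le n$), the ``good'' part $\Gamma^{(n)\star}$ carries the Hall condition forward, and Lemmas \ref{1stsim} and \ref{2ndsim} (via the accessibility/alternating-path Lemma \ref{access} applied to a minimal connected violating set) show the exceptional fans can be re-absorbed when needed. Your appeal to ``a Kierstead-type extension lemma'' gestures at Lemma \ref{access}, but without the $\perp$-decomposition there is no statement for such a lemma to prove.

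The second, downstream consequence of this gap is that your bound $k\le 2n$ in condition (iii) is asserted rather than derived. In the paper, the forward orbit of $n$ before reaching a cycle passes, at each step $s$, through $u_s$ plus the $\ell_s$ exceptional roots consumed from $U^{(s-1)\perp}$ during the iteration of part 2 of that step; the key counting fact is that $\sum_{s=1}^{n}\ell_s\le n-1$ because at most one exceptional fan is ever created per step. This yields $k\le\sum_{s=1}^{n}(\ell_s+1)\le 2n-1$. Your sketch has no analogue of the quantity $\ell_s$ and hence no mechanism to certify that the ``carefully chosen depth'' at which you close a cycle is actually bounded by $2n$; the depth at which closure becomes possible is controlled by how many deferred exceptional fans the orbit must traverse, and that is exactly the bookkeeping your proposal omits.
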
 

The proof of the theorem is based on an inductive construction of the matching. 

\subsection{Notation used in the construction} 
Since this construction is highly technical, 
we start with a list of the notation. 
We do not insist on a thorough inspection of it. 
In the beginning a hasty view will suffice. 
\begin{itemize}

\item $M$ is a perfect matching that we construct.

\item $M_{n-1}$ is a set of $(1,d-1)$-fans added to $M$ at the end of the $n$-th step. Thus $M=\bigcup\limits_{n=1}^{\infty}M_{n-1}$.

\item $\Gamma^{(0)}=(U^{(0)},V^{(0)},E^{(0)})$ is the original graph $\Gamma$.

\item $U^{(n)}:=U^{(n-1)}\setminus \{u\in U^{(n-1)}: \exists v\in V^{(n-1)}, (u,v)\in M_{n-1}\}$.

\item $V^{(n)}:=V^{(n-1)}\setminus \{v\in V^{(n-1)}: \exists u\in U^{(n-1)}, (u,v)\in M_{n-1}\}$.

\item $\Gamma^{(n)}=(U^{(n)},V^{(n)})$. 
We will see that $\Gamma^{(n)}$ is $U^{(n)}$-reflected.

\item After the $n$-th step we obtain decompositions 
$U^{(n)}= U^{(n)\star}\, \dot{\cup} \, U^{(n)\perp}$ and  $V^{(n)}= V^{(n)\star}\, \dot{\cup} \, V^{(n)\perp}$,  
where we say that $U^{(n)\perp}$ consists of elements from $U^{(n)}$ which might spoil Hall's $d$-harem condition for $\Gamma^{(n)}$. 

\item Put $U^{(0)\perp} =\emptyset$ and $V^{(0)\perp}=\emptyset$ 
(since $\Gamma^{(0)}$ is $\Gamma$, i.e. it satisfies Hall's $d$-harem condition). 

\item $\Gamma^{(n)\perp}$ is a graph with the sets of vertices $(U^{(n)\perp},V^{(n)\perp})$ and the set of edges corresponding to $(1,d-1)$-fans with roots denoted by $u^{\perp}_i\in U^{(n)\perp}$. 

\item When $U^{(n)\perp}\setminus U^{(n-1)\perp}$ is not empty,  $U^{(n)\perp}\setminus U^{(n-1)\perp}=\{u^{\perp}_{n-1}\}$ and $V^{(n)\perp}\setminus V^{(n-1)\perp}$ consists of leaves $\{v^{\perp}_{n-1,i}:1\leq i\leq d-1\}$. 

\item  $\Gamma^{(n)\star}:=(U^{(n)}\setminus U^{(n)\perp}, V^{(n)}\setminus V^{(n)\perp})$. 
We will see that $\Gamma^{(n)\star}$ satisfies Hall's $d$-harem condition.

\item During the construction of $M_{n}$ we will define fans $M^j_{n}$, $j\leq n+1$. 
The graph $M_{n}$ is the union of them. 

\item $M^j_{n}$ is a fan consisting of edges denoted by $\{(u^{j}_n,v^j_{n,i}): i\leq d-1\}$.

\item $u_n$ is a starting vertex of the $n$-th step, it is also denoted by $u^0_n$.

\item For any subgraph $\Gamma'$ of $\Gamma^{(n)}$ we denote by $\Gamma'(-u^{0}_n,\ldots, -u^{j}_n)$ the graph obtained from $\Gamma'$ by removal of the $(1,d-1)$-fans of $M_n$ with roots $u^{0}_n,\ldots, u^{j}_n$. 

\item For any subgraph $\Gamma'=(U',V')$ of $\Gamma^{(n)}$ and any $u_j^{\perp}\in U^{(n)\perp}$ we denote by $\Gamma'(+u_j^{\perp})$ the graph induced in $\Gamma^{(n)}$ by the sets of vertices $U'\cup\{u_j^{\perp}\}$ and $V'\cup\{v^{\perp}_{j,i}: 1\leq i\leq d-1\}$. 

\item For any subgraph $\Gamma'=(U',V')$ of $\Gamma^{(n)}$ and any vertex $v\in V^{(n)}$ we denote by $\Gamma'(+v)$ (resp. $\Gamma'(-v)$) the graph induced in $\Gamma^{(n)}$ by the sets of vertices $U'$ and $V'\cup\{v\}$ (resp. $V'\setminus \{ v\}$). 

\item $\mathfrak{M}^1_{n}$ denotes a perfect $(1,d)$-matching in $\Gamma^{(n)\star}$ which appears in the first part of step $n+1$. 

\item $\mathfrak{M}^2_{n}$ denotes a perfect $(1,d)$-matching in $\Gamma^{(n)}(-u^{0}_n,\ldots, -u^{j}_n)\cap \Gamma^{(n)\star}$ for some $j$, which appears in the second part of step $n+1$.

\item The elements adjacent to $u^{j+1}_n$ in the matching $\mathfrak{M}^2_{n}$ are denoted by $\dot{v}^{j+1}_{n,1}\ldots  \dot{v}^{j+1}_{n,d}$. 
The element $\dot{v}^{j+1}_{n,1}$ is a candidate for $v^{j+1}_{n,1}$. 

\item The fan $(u^{\perp}_{n}, \{ v^{\perp}_{n,j} \, |1\leq j\leq d-1 \})$ usually appears as a part of a fan of the matching $\mathfrak{M}^2_{n}$. 
We warn the reader that it is possible that $u^{\perp}_{n}$ does not exist.   

\item We assume that all of these elements (i.e. $u_n, v^j_{n,i}, u^j_n , v^{\perp}_{j,i}, \ldots$ ) are natural numbers and are ordered according to the standard ordering of natural numbers.
\end{itemize}

Before describing the construction in detail, we show the single step of the algorithm in the picture attached below, see Figure 1. 
We inform the reader that detailed illustrations of all parts of the step will be given in the appropriate places.

\begin{figure}[ht]
  \centering
    \includegraphics[width=1\textwidth]{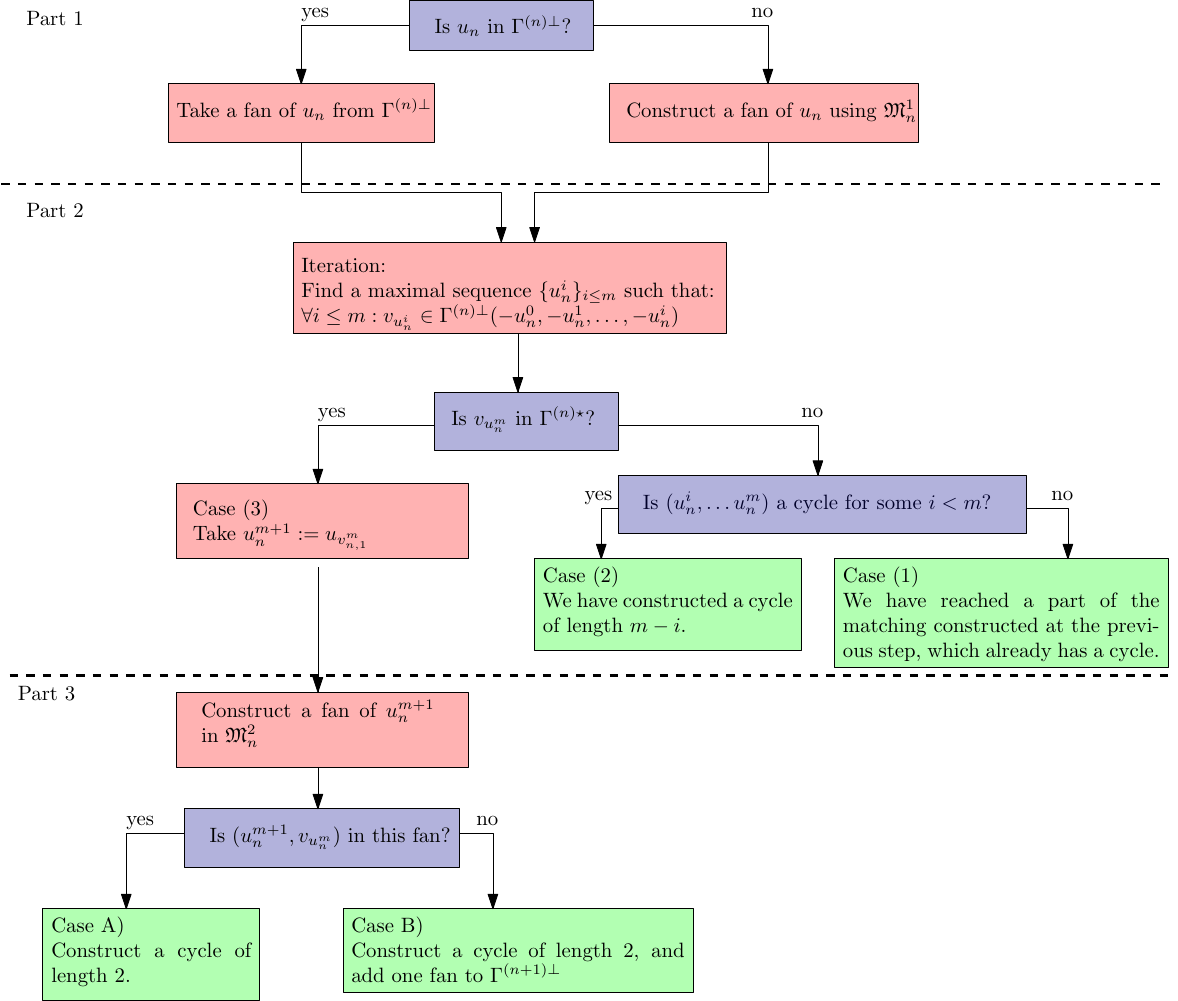}
    	\caption{Single step.}
\end{figure}

\section{The construction}\label{simconst}
We assume that $\Gamma=(U,V,E)$ is a bipartite graph satisfying Hall's $d$-harem condition, such that:
\begin{itemize}
\item both $U$ and $V$ are identified with $\mathbb{N}\setminus\{0\}$;
\item $\Gamma$ is fully reflected;   
\item $E$ does not contain edges of the form $(v,u_v)$. 
\end{itemize}

We now describe an inductive construction that is the heart of the 
proof of Theorem \ref{hhco}. 
Every step consists of three parts. 
Some of them are finished by claims stating that certain graphs satisfy Hall's $d$-harem condition. 
These statements support further stages of the construction. 
We start with a detailed description of the first step of the construction.

\subsection{Step 1, part 1} 
We take $u_0$, the first element of the set $U$ (it is clear that $u_0=1$). 
Using Hall's harem theorem, we find a perfect $(1,d)$-matching $\mathfrak{M}^1_{0}$. 
Let $v^0_{0,1},\ldots, v^0_{0,d}$ be elements of $V$, ordered by its numbers, such that $(u_0,v^{0}_{0,i})\in \mathfrak{M}^1_{0}$ for all $i\leq d$. 
Define the fan $M^0_0$ as the set of edges $(u_0,v_{0,i})$ for $i\leq d-1$. 
Let $\Gamma^{(0)}(-u_0):=(U\setminus\{u_0\},V\setminus\{v^0_{0,1},\ldots, v^0_{0,d-1}\})$. 

Note that the graph 
$\Gamma^{(0)}(-u_0)\setminus \{ v^0_{0,d}\}$
satisfies Hall's $d$-harem condition, since it obviously has a perfect $(1,d)$-matching. 
Furthermore, the following (more complicated) statement holds too. 

\begin{clm}\label{cs1p1}
The graph $\Gamma^{(0)}(-u_0)$ satisfies Hall's $d$-harem condition.
\end{clm}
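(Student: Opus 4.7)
The plan is to verify each half of Hall's $d$-harem condition for $G := \Gamma^{(0)}(-u_0)$, leveraging the remark preceding the claim: the strictly smaller graph $H := \Gamma^{(0)}(-u_0) \setminus \{v^{0}_{0,d}\}$ already satisfies Hall, since it inherits the perfect $(1,d)$-matching $\mathfrak{M}^1_{0} \setminus \{(u_0,v^{0}_{0,1}),\ldots,(u_0,v^{0}_{0,d})\}$. The $U$-side condition $|N_G(X)| \geq d|X|$ for finite $X \subseteq U \setminus \{u_0\}$ is then immediate, since $N_G(X) \supseteq N_H(X)$ and Hall on $H$ already gives $|N_H(X)| \geq d|X|$.

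For the $V$-side condition, fix a finite $Y \subseteq V \setminus \{v^{0}_{0,1},\ldots,v^{0}_{0,d-1}\}$. If $v^{0}_{0,d} \notin Y$, then $Y$ lies in the $V$-side of $H$, so Hall on $H$ gives $|N_G(Y)| = |N_H(Y)| \geq |Y|/d$ directly.

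The main obstacle is the case $v^{0}_{0,d} \in Y$: the crude estimate $|N_G(Y)| \geq |N_\Gamma(Y)| - 1 \geq |Y|/d - 1$ obtained from Hall on $\Gamma$ falls short by $1$ precisely when $|Y|$ is a multiple of $d$ and the neighborhood in $\Gamma$ is tight, while Hall on $H$ carries no information about $v^{0}_{0,d}$. To get around this, I would invoke the full reflection of $\Gamma$: the bijection $v \mapsto u_v$ sends $Y$ to a subset $X_Y \subseteq U$ with $|X_Y| = |Y|$, so the $U$-side of Hall applied to $X_Y$ gives $|N_\Gamma(X_Y)| \geq d|Y|$; and reflection converts each $v' \in N_\Gamma(X_Y)$ into a distinct vertex $u_{v'} \in N_\Gamma(Y)$ (from $(u_v,v') \in E$ reflection yields $(u_{v'},v) \in E$, so $u_{v'} \in N_\Gamma(\{v\})$), producing $|N_\Gamma(Y)| \geq d|Y|$. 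Deleting $u_0$ from this neighborhood costs at most one element, so $|N_G(Y)| \geq d|Y| - 1 \geq |Y|/d$ for every nonempty $Y$ when $d \geq 2$, completing the verification.
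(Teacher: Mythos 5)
Your proof is correct, and it reaches the claim by a genuinely more elementary route than the paper. The paper does not prove Claim \ref{cs1p1} in isolation: it derives it as the $U^{(0)\perp}=\emptyset$ special case of Lemma \ref{1stsim}, whose proof reduces to a \emph{minimal connected} witness $X$ containing $\mathfrak{v}=v^0_{0,d}$, splits into the cases $X=\{\mathfrak{v}\}$ and $|X|>d$, and invokes Lemma \ref{neighsize2} to get the bound $|N(X)|\geq (d-1)|X|-1$ (which is why the connectivity reduction and the size dichotomy are needed there). Your argument shares the same first move --- the matching $\mathfrak{M}^1_0$ minus the fan of $u_0$ disposes of the $U$-side and of all $Y$ avoiding $v^0_{0,d}$ --- but then handles $Y\ni v^0_{0,d}$ by applying the $U$-side Hall inequality to $X_Y=\{u_v: v\in Y\}$ in the original graph $\Gamma$ and reflecting the resulting neighborhood back, which yields the much stronger estimate $|N_\Gamma(Y)|\geq d|Y|$ and makes the loss of the single vertex $u_0$ harmless; no minimality, connectivity, or case analysis on $|Y|$ is required. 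What your approach buys is a short, self-contained proof of the base-case claim; what it gives up is generality: at later steps the ambient graph $\Gamma^{(n)}$ is only $U^{(n)}$-reflected, the Hall condition is guaranteed only on $\Gamma^{(n)\star}$, and the roots in $U^{(n)\perp}$ carry only $(d-1)$-fans, so the reflection argument degrades to exactly the $(d-1)|X|-1$ bound of Lemma \ref{neighsize2} and the paper's extra machinery (minimal connected witnesses, the accessibility lemma) becomes necessary. For the statement as posed, your proof stands on its own.
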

See Lemma \ref{1stsim} for the proof of this claim.

\begin{figure}[h]
  \centering
    \includegraphics[width=0.7\textwidth]{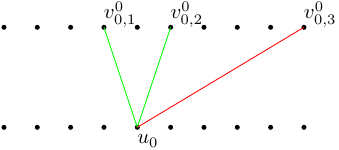}
	\caption{The first part of Step 1, the  $\mathfrak{M}^1_{0}$-fan of $u_0$ where $M^0_0$ is green.}
\end{figure}

\subsection{Remarks before part 2}
Before the second part of step 1 let us discuss our local goals.  
Let $f_0$ be the partial function which corresponds to $M_0$ and let $\Gamma^{(1)}$ be the graph obtained after step 1, i.e. the part of $\Gamma$ after removal $M_0$. 
We want to force that:
\begin{enumerate}
\item for all $n \in \mathsf{Dom}(f_0)$  there exists $i$ such that $f_0^i(n)$ is a periodic point, and
\item $\Gamma^{(1)}$ is $U^{(1)}$-reflected.
\end{enumerate}
It is clear that (1)-(2) are satisfied if we add the edge $(u_{v^0_{0,1}},v_{u_0})$ to the matching. 
This means that $f(v_{u_0}) = u_{v^0_{0,1}}$, i.e.  $f_0^2(u_0)= f_0 (f_0 (v_{u_0})) = u_0$. 

\subsection{Step 1, part 2} 
Denote $u^{1}_0:=u_{v^0_{0,1}}$.
Note that $(u^1_0,v_{u_0})\in\Gamma^{(0)}(-u_0)$, because $(u_0,v^0_{0,1})$ is in $\Gamma^{(0)}$ and the latter one is $U^{(0)}$-reflected.

\begin{rem}
For arbitrary $n$ part 2 of step $n$ depends on the subgraph $\Gamma^{(n-1)\perp}$. Since $\Gamma^{(0)\perp}$ is empty, at step 0 this part is reduced just to the choice of the vertex $u^{1}_0$. 
\end{rem}
In part 3 we will add the edge $(u^{1}_0, v_{u_0})$, to $M_0^1$. 

\subsection{Step 1, part 3} 
Since $\Gamma^{(0)}(-u_0)$ satisfies Hall's $d$-harem condition, it has a perfect $(1,d)$-matching $\mathfrak{M}^2_{0}$. 
We remind the reader that $\dot{v}^1_{0,1},\ldots, \dot{v}^1_{1,d}$ are elements of $V$, ordered by its numbers, such that $(u^1_0,\dot{v}^1_{0,i})\in \mathfrak{M}^2_{0}$ for all $i\leq d$. 
Since $v_{u_0}$ is the least number in $V$, there are two possible cases: 
\begin{enumerate}

\item $v_{u_0}=\dot{v}^1_{0,1}$. 
We set $v^1_{0,i}:=\dot{v}^1_{0,i}$, $1 \le i \le d-1$ (i.e. $v^1_{0,1} = v_{u_0}$). 

\item $v_{u_0}\neq \dot{v}^1_{0,1}$. 
Then find the fan $(u,v_i)\in \mathfrak{M}^2_{0}$, $1\leq i\leq d$, such that $v_{u_0}=v_1$ (assuming that the ordering of $v_i$ corresponds to their indexes). 
We set:
\begin{itemize}
\item $v^1_{0,1}:=v_{u_0}$ and $v^1_{0,i}:=\dot{v}^1_{0,i-1}$ for $2< i\le d-1$,   
\end{itemize} 
and define a candidate for $\Gamma^{(1)\perp}$: 
\begin{itemize} 
\item $\dot{u}^{\perp}_0:=u$;
\item $\dot{v}^{\perp}_{0,i-1}:=v_i$ for $1 < i\le d-1$.
\end{itemize}
\end{enumerate}

In either case define the fan $M^1_0$ as the set of edges $(u^1_0,v^1_{0,i})$ for $1\leq i\leq d-1$.

\begin{figure}[H]
  \centering
    \includegraphics[width=0.65\textwidth]{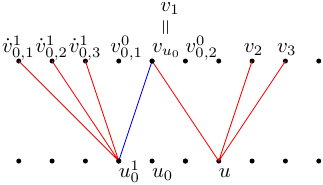}
    \caption{Step 1, part 3. $\mathfrak{M}^2_{0}$ is red and $v_{u_0}$ is matched with $u$. We want the edge $(u^{1}_0, v_{u_0})$ to be in $M_0$. Force the situation from Figure 4.}
\end{figure}

\begin{figure}[H]
  \centering
    \includegraphics[width=0.65\textwidth]{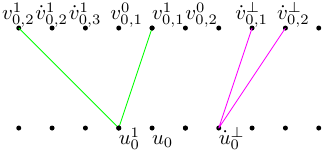}
    \caption{$M^1_{0}$ is green. It is possible that the purple fan consisting of edges $(\dot{u}^{\perp}_0,\dot{v}^{\perp}_{0,1}),(\dot{u}^{\perp}_0,\dot{v}^{\perp}_{0,2})$ will be added to $\Gamma^{(1)\perp}$.}
\end{figure}

Put $M_0=M^0_0\cup M^1_0$. 
We obtain $\Gamma^{(1)}$ by removal of $M_0$ from $\Gamma$. Since $u_0, v_{u_0}, v^0_{0,1}$ and $u^1_0=u_{v^0_{0,1}}$ have been removed, $\Gamma^{(1)}$ is $U^{(1)}$-reflected. 
It might turn out that it does not satisfy Hall's $d$-harem condition. 

Let $\Gamma'_0=(U^{(1)}\setminus \{\dot{u}_0^{\perp}\},V^{(1)}\setminus \{\dot{v}^{\perp}_{0,1},...,\dot{v}^{\perp}_{0,d-1}\})$. 
The following claim follows from Lemma \ref{2ndsim}.

\begin{clm}\label{cs1p2part1}
At least one of 
$ \, \Gamma'_{0} \, $ or $ \, \Gamma^{(1)} \, $  
satisfies Hall's $d$-harem condition.
\end{clm}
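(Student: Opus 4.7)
The plan is to use Claim \ref{cs1p1} together with the perfect $(1,d)$-matching $\mathfrak{M}^2_0$ of $H:=\Gamma^{(0)}(-u_0)$ that it provides, and to split on the two cases of part 3. Recall that $\Gamma^{(1)}$ is obtained from $H$ by deleting $u^1_0$ and $v^1_{0,1},\ldots,v^1_{0,d-1}$, and, in case (2) of part 3, $\Gamma'_0$ further deletes $\dot{u}^\perp_0$ together with $\dot{v}^\perp_{0,1},\ldots,\dot{v}^\perp_{0,d-1}$.

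In case (1) of part 3 the ``spoiler'' $\dot{u}^\perp_0$ is not defined, and the claim reduces to showing that $\Gamma^{(1)}$ itself satisfies Hall's $d$-harem condition. Here $v^1_{0,i}=\dot{v}^1_{0,i}$ for $1\le i\le d-1$, so restricting $\mathfrak{M}^2_0$ to $\Gamma^{(1)}$ simply removes the fan of $u^1_0$ and leaves $\dot{v}^1_{0,d}$ as the only unmatched vertex. The left inequality $|N(X)|\ge d|X|$ for $X\subseteq U^{(1)}$ is immediate from this restricted matching. For the right inequality $|N(Y)|\ge |Y|/d$ I would split on whether $\dot{v}^1_{0,d}\in Y$: if not, the matching directly provides the needed $d$-to-$1$ function from $Y$ into $N(Y)$; if so, I apply the inequality to $Y\setminus\{\dot{v}^1_{0,d}\}$ and close the remaining $1/d$ gap using the fact that $\dot{v}^1_{0,d}$ has at least one neighbor in $\Gamma^{(1)}$.

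In case (2) I would argue the contrapositive: assume $\Gamma^{(1)}$ fails Hall's $d$-harem and prove that $\Gamma'_0$ satisfies it. Pick a finite witness $X\subseteq U^{(1)}$ with $|N_{\Gamma^{(1)}}(X)|<d|X|$ (the dual case of a $V$-side witness is handled symmetrically, using the $1$-to-$d$ structure of $\mathfrak{M}^2_0$). Since $N_{\Gamma^{(1)}}(X)=N_H(X)\setminus\{v^1_{0,1},\ldots,v^1_{0,d-1}\}$ and Claim \ref{cs1p1} forces $|N_H(X)|\ge d|X|$, the value $|N_H(X)|$ lies in the narrow range $[d|X|,\,d|X|+d-2]$, and $X$ necessarily meets at least one of the removed vertices $v^1_{0,i}$. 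The decisive point is that $\dot{u}^\perp_0$ was chosen precisely so that in $\mathfrak{M}^2_0$ its fan consists of $v_{u_0}=v^1_{0,1}$ together with the leaves $\dot{v}^\perp_{0,j}$. Using $\mathfrak{M}^2_0$ as a bookkeeping tool, I would show that any such witness $X$ is ``pinned'' by this fan: the deficient edges in $\Gamma^{(1)}$ can be rerouted into the fan of $\dot{u}^\perp_0$, so that after further removing this fan to pass to $\Gamma'_0$, every candidate witness $X'\subseteq U^{(1)}\setminus\{\dot{u}^\perp_0\}$ recovers the Hall inequality.

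The main obstacle is precisely this pinning argument. Passing from $H$ to $\Gamma'_0$ removes $2(d-1)$ vertices from the $V$-side but only $2$ from the $U$-side, so naive cardinality estimates from Hall's inequality for $H$ leave too much slack to yield a direct contradiction. Overcoming this requires working with the matching $\mathfrak{M}^2_0$ itself rather than with neighborhood sizes alone, in order to trace exactly which of the removed $V$-vertices lie in $N_H(X)$ and how they are ``paid for'' by the fans of $u^1_0$ and $\dot{u}^\perp_0$. I expect this matching-level bookkeeping --- which is also what Lemma \ref{2ndsim} is set up to perform in greater generality --- to be the technical heart of the proof.
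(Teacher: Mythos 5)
Your overall framing is right --- the disjunction hinges on whether the fan of $\dot{u}^{\perp}_0$ is needed to repair a Hall violation, and the matching $\mathfrak{M}^2_0$ is indeed the correct bookkeeping device --- but both halves of your argument have genuine gaps. In case (1), your treatment of a $V$-side set $Y\ni\dot{v}^1_{0,d}$ does not close: from $|N(Y\setminus\{\dot{v}^1_{0,d}\})|\ge\frac{1}{d}|Y\setminus\{\dot{v}^1_{0,d}\}|$ and the mere existence of one neighbor of $\dot{v}^1_{0,d}$ you cannot conclude $|N(Y)|\ge\frac{1}{d}|Y|$, because that neighbor may already lie in $N(Y\setminus\{\dot{v}^1_{0,d}\})$ (take $|Y|-1$ divisible by $d$ with the inequality tight). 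The paper closes exactly this gap with reflectedness, not with the matching: since $\Gamma^{(1)}$ is $U^{(1)}$-reflected, every $v\in Y$ has a left copy $u_v\in U^{(1)}$, the neighbors of $u_v$ reflect back to left copies lying in $N_{\Gamma^{(1)}}(Y)$, and one gets $|N_{\Gamma^{(1)}}(Y)|\ge(d-1)|Y|\ge\frac{1}{d}|Y|$ (Lemma \ref{neighsize}, packaged as Claim \ref{gn1}). Your case (1) never invokes reflectedness, and without it the $V$-side inequality is not provable.

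In case (2) the ``pinning'' step that you yourself identify as the technical heart is a placeholder rather than an argument, and the surrounding logic points the wrong way. Assuming $\Gamma^{(1)}$ fails and exhibiting a $U$-side witness $X$ gives you nothing pointwise about arbitrary candidate witnesses against $\Gamma'_0$; moreover the $U$-side of $\Gamma'_0$ is the trivial side (it contains $\Gamma'_0(-\dot{v}^1_{0,d-1},-\dot{v}^1_{0,d})$, which is $\Gamma^{(0)}(-u_0)$ minus two whole $\mathfrak{M}^2_0$-fans and hence inherits a perfect $(1,d)$-matching), while the genuinely hard case is a $V$-side witness of $\Gamma'_0$ containing one of the two unmatched leftovers $\dot{v}^1_{0,d-1},\dot{v}^1_{0,d}$ --- precisely the case you wave off as ``symmetric''. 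The paper's Lemma \ref{2ndsim} runs the implication in the other direction: it assumes $\Gamma'_0=\mathfrak{T}$ fails, takes a \emph{minimal connected} $V$-side witness $X$ containing a leftover vertex, uses the reflectedness bound of Claim \ref{gn1} to force $\dot{u}^{\perp}_0\in N_{\Gamma^{(1)}}(X)$, and then applies the accessibility Lemma \ref{access} to build an alternating path along which $\mathfrak{M}^2_0$ is explicitly rerouted into a perfect $(1,d)$-matching of $\Gamma^{(1)}(-\dot{v}_2)$, with a second pass for the other leftover vertex. Minimal connectedness, the reflectedness bound, and the augmenting-path surgery are all absent from your sketch; as written the proposal is a plausible plan, not a proof.
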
 

\subsection{The output of the first step}

If $\Gamma^{(1)}$ satisfies Hall's $d$-harem condition, set
$\Gamma^{(1)\star}:=\Gamma^{(1)}$, $U^{(1)\perp}=\emptyset$ and $V^{(1)\perp}=\emptyset$.
If $\Gamma^{(1)}$ does not satisfy Hall's $d$-harem condition, set
$\Gamma^{(1)\star}:=\Gamma'_{0}$ and 
\begin{itemize}
\item $u^{\perp}_0:=\dot{u}^{\perp}_0$;
\item $v^{\perp}_{0,i}:=\dot{v}^{\perp}_{0,i}$, $1 \leq i\leq d-1$;
\item $U^{(1)\perp}=\{u^{\perp}_0\}$;
\item $V^{(1)\perp}=\{v^{\perp}_{0,i}: 1 \leq i\leq d-1\}.$
\end{itemize}

\subsection{The situation before step $n+1$}
At the previous step we constructed graphs $\Gamma^{(n)}$ and $\Gamma^{(n)\star}$, where $\Gamma^{(n)}$ is $U^{(n)}$-reflected and $\Gamma^{(n)\star}$ satisfies Hall's $d$-harem condition.
Since $|U^{(n)\perp}\setminus U^{(n-1)\perp}|\leq 1$, there are at most $n$ roots $u^{\perp}_i$ of fans in $U^{(n)\perp}$ (see Section 2.4 for the corresponding definition).

\subsection{Step n+1, part 1}  
Let $\mathfrak{M}^1_n$ be an $(1,d)$-matching in $\Gamma^{(n)\star}$.  
Take $u_n$, the first element of the set $U^{(n)}$. 
In order to define $M^0_n$ we have two possible cases:

\begin{enumerate} 
\item There is $j$ such that $u_n=u_j^{\perp}\in U^{(n)\perp}$.  Then we set $M^0_n$ to consist of all edges of the form $(u^{\perp}_j,v^{\perp}_{j,i})$ and remove the fan with the root $u^{\perp}_j$ from $\Gamma^{(n)\perp}$. We redefine $U^{(n)\perp}$ and $V^{(n)\perp}$ accordingly (in particular $u_j^{\perp}$ is removed from $U^{(n)\perp}$).

\item If $u_n\not\in U^{(n)\perp}$, then 
verify whether there is $j$ such that $(u_n, v^0_{n,j}) \in \mathfrak{M}^1_n$ and $(u_{v^0_{n,j}},v_{u_n})\in \Gamma^{(n)\perp}$.
By the definition of $\Gamma^{(n)\perp}$ it can happen for at most one $j$.\footnote{Note here that it can also happen that $v_{u_n}$ is not even in $\Gamma^{(n)}$.} 
If there is such $j$, then it can be equal to $d$. In this case, we set $M^0_n$ to consist of edges $(u_n,v^0_{n,i})\in\mathfrak{M}^1_n$ for $i\neq d-1$.
In all other possibilities, we set $M^0_n$ to consist of edges $(u_n,v^0_{n,i})\in \mathfrak{M}^1_n$ for $i\leq d-1$.
\end{enumerate}
\begin{rem} 
Note that as a result in the case of existence of $j$ as in (2) the edge $(u_n, v^0_{n,j})$ is included into $M^0_n$. 
Although the final part of the algorithm of (2) can be presented easier, i.e. as in Figure \ref{fig:part1},   
we do not want to miss this point. 
\end{rem}

\begin{figure}[ht]
  \centering
    \includegraphics[width=1\textwidth]{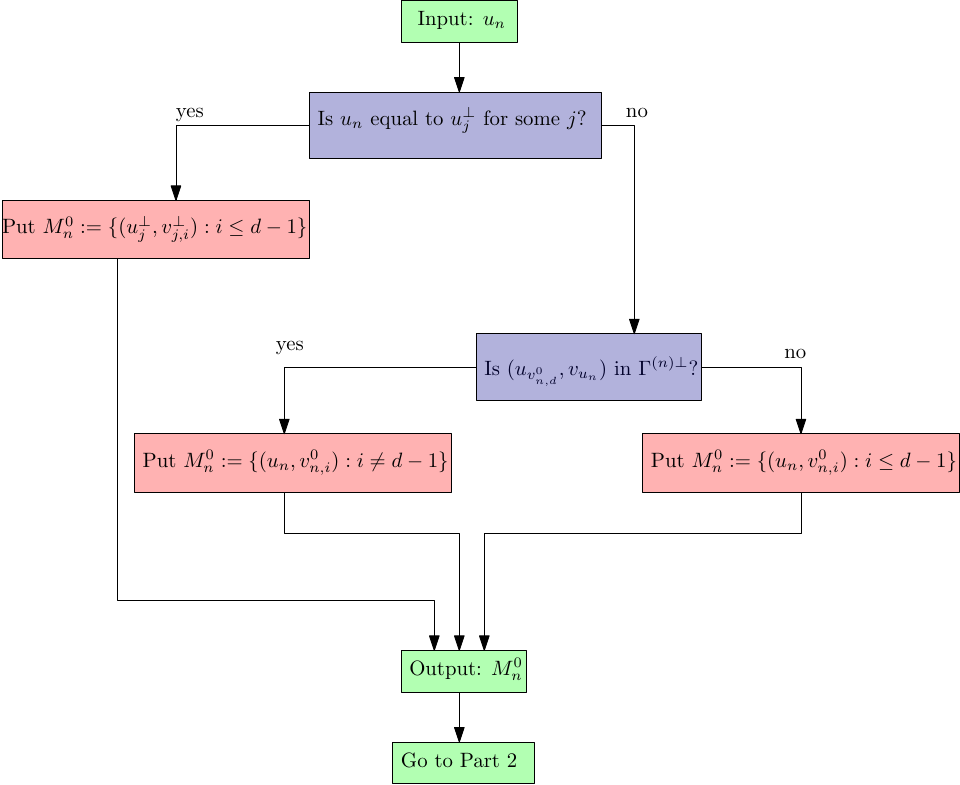}
\caption{Detailed version of the algorithm used at part 1 of step $n+1$.}\label{fig:part1}
\end{figure}

\pagebreak

\begin{clm}\label{csnp1}
Let $\Gamma^{(n)\star}(-u_n)$ be $\Gamma^{(n)}(-u_n)\cap \Gamma^{(n)\star}$.
One of the following holds:
\begin{itemize}
\item $\Gamma^{(n)\star}(-u_n)$ satisfies Hall's $d$-harem condition;
\item there is some vertex $u_j^{\perp}\in U^{(n)\perp}$ such that the graph $\Gamma^{(n)\star}(-u_n,+u_j^{\perp})$ satisfies Hall's $d$-harem condition.
\end{itemize}
\end{clm}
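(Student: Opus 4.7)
The plan is to split on whether $u_n$ lies in $U^{(n)\perp}$ or in $U^{(n)\star}$. In the first case, by construction $M^0_n$ is the $\perp$-fan $\{(u_j^\perp,v_{j,i}^\perp):1\le i\le d-1\}$ with $u_n=u_j^\perp$, entirely inside $\Gamma^{(n)\perp}$. Since $\Gamma^{(n)\star}$ and $\Gamma^{(n)\perp}$ have disjoint vertex sets, intersecting $\Gamma^{(n)}(-u_n)$ with $\Gamma^{(n)\star}$ recovers $\Gamma^{(n)\star}$ unchanged, so the first bullet follows immediately from the inductive assumption that $\Gamma^{(n)\star}$ satisfies Hall's $d$-harem condition.

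Assume now $u_n\in U^{(n)\star}$. Then $\Gamma^{(n)\star}(-u_n)$ is obtained from $\Gamma^{(n)\star}$ by deleting $u_n$ together with the $d-1$ leaves of the fan $M^0_n$, which are all $\mathfrak{M}^1_n$-neighbors of $u_n$. For any finite $X\subseteq U^{(n)\star}\setminus\{u_n\}$ the matching $\mathfrak{M}^1_n$ supplies $d$ pairwise distinct matched neighbors of each $x\in X$, none of which has been removed (they are matched to $x$, not to $u_n$), so $|N(X)|\ge d|X|$ in $\Gamma^{(n)\star}(-u_n)$. Hence the first harem inequality is automatic and any failure must come from the second. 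Suppose some finite $Y\subseteq V^{(n)\star}\setminus\{v^0_{n,1},\ldots,v^0_{n,d-1}\}$ violates it. Because $N_{\Gamma^{(n)\star}}(Y)$ and $N_{\Gamma^{(n)\star}(-u_n)}(Y)$ can differ by at most the single vertex $u_n$, while the former has size at least $\lceil|Y|/d\rceil$ by assumption on $\Gamma^{(n)\star}$, an integer count forces
\[
|N_{\Gamma^{(n)\star}(-u_n)}(Y)|=\lceil|Y|/d\rceil-1 \quad\text{and}\quad u_n\in N_{\Gamma^{(n)\star}}(Y).
\]
So the defect is exactly one and $u_n$ is the only missing neighbor.

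The remaining task is to exhibit a vertex $u_j^\perp\in U^{(n)\perp}$ whose insertion restores this missing neighbor without creating new violations elsewhere. The intended strategy mirrors Lemma \ref{2ndsim}, invoked for the analogous Claim \ref{cs1p2part1} in step 1: one analyses the family of defect-one sets using submodularity of the neighborhood function, and then uses the reflection structure of $\Gamma^{(n)}$ together with the fan structure of $\Gamma^{(n)\perp}$ to locate a suitable root $u_j^\perp$ adjacent to a maximal such set. Since the deficit is only a single $U$-vertex, adding $u_j^\perp$ on the $U$-side of $\Gamma^{(n)\star}(-u_n)$ heals the $Y$-side of the harem condition for every affected $Y$.

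The genuine obstacle is the side effect on the $V$-side: inserting $u_j^\perp$ simultaneously adjoins the $d-1$ leaves $v_{j,i}^\perp$ into $V$, and one must verify both that no enlargement $Y\cup S$ with $S\subseteq\{v_{j,1}^\perp,\ldots,v_{j,d-1}^\perp\}$ becomes bad, and dually that the first inequality continues to hold for $X\cup\{u_j^\perp\}$. Here the careful bookkeeping of the $\perp$-fans, combined with the Kierstead-style alternating-path method of \cite{hak} as adapted in \cite{CPD}, is what should force the existence of an admissible $u_j^\perp$ whenever $\Gamma^{(n)\star}(-u_n)$ itself fails Hall's $d$-harem condition.
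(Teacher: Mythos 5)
Your reduction to the case $u_n\in U^{(n)\star}$ and your two preliminary observations there are sound and agree with the opening of the paper's argument (Lemma \ref{1stsim}): the $U$-side inequality survives because the $d-1$ deleted leaves all lie in the $\mathfrak{M}^1_n$-fan of $u_n$ and hence are matched to no other root, and any violating set must be a finite $Y\subseteq V$ whose neighborhood loses only the single vertex $u_n$, so the deficit is exactly one. (The paper sharpens this further: the witness must contain the one leftover leaf $\mathfrak{v}$ of $u_n$'s $\mathfrak{M}^1_n$-fan, since $\Gamma^{(n)\star}(-u_n,-\mathfrak{v})$ still carries a perfect $(1,d)$-matching.)

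However, the two steps that actually constitute the proof are only announced, not carried out, and this is a genuine gap. First, you never show that a deficient $Y$ has a neighbor in $U^{(n)\perp}$ at all. In the paper this is Step 1 of Lemma \ref{1stsim}: one passes to a \emph{minimal connected} witness $X\ni\mathfrak{v}$ and proves $|N_{\Gamma^{(n)}(-u_n)}(X)|\ge\frac{1}{d}|X|$ via the reflection lemma (Lemma \ref{neighsize2}), with a separate and delicate singleton case $X=\{\mathfrak{v}\}$ that uses the special rule in part 1, case (2) of the step (the choice of which leaf of $u_n$'s fan is excluded from $M^0_n$) to rule out $|N(\mathfrak{v})|=0$; only then does the strict inequality $|N_{\Gamma^{(n)\star}(-u_n)}(X)|<\frac{1}{d}|X|$ force $N(X)\cap U^{(n)\perp}\neq\emptyset$. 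Second, and more seriously, your claim that inserting $u_j^{\perp}$ ``heals the $Y$-side for every affected $Y$'' is exactly what has to be proved, and your proposed route (submodularity, a maximal deficient set) is not executed and is not obviously adequate: a root adjacent to one maximal deficient set need not be adjacent to all deficient sets, and you must also control the $d-1$ new $V$-vertices $v^{\perp}_{j,i}$ and sets of the form $X\cup\{u_j^{\perp}\}$. The paper avoids all of this by not arguing set-by-set: it applies the accessibility lemma (Lemma \ref{access}) to obtain an alternating path from $u_j^{\perp}$ to $\mathfrak{v}$ through the minimal connected witness and reroutes $\mathfrak{M}^1_n$ along it to exhibit an explicit perfect $(1,d)$-matching of $\Gamma^{(n)\star}(-u_n,+u_j^{\perp})$, which certifies Hall's $d$-harem condition globally in one stroke. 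Without some version of that construction (or an equally global argument), your proposal does not establish the second bullet of the claim.
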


In case (1),  $\Gamma^{(n)\star}(-u_n) =  \Gamma^{(n)\star}$ and the claim is obvious. In case (2) the claim follows from Lemma \ref{1stsim} below.

\subsection{The output of part 1} 

This is $M_n^0$. We also update our graphs in the following situation.
If $\Gamma^{(n)\star}(-u_n)$ does not satisfy Hall's $d$-harem condition, let $u_j^{\perp}$ be an element from $U^{(n)\perp}$ realizing the second possibility of Claim \ref{csnp1}. 
We remove the fan of $u^{\perp}_j$ with its leaves 
from $(U^{(n)\perp},V^{(n)\perp})$ and then we put it into $\Gamma^{(n)\star}$.
Thus the latter graph (and $U^{(n)\perp},V^{(n)\perp}$) are updated.
It is clear that now the redefined $\Gamma^{(n)\star}(-u_n)$ satisfies Hall's $d$-harem condition.

\bigskip 

\subsection{Remarks before part 2} \label{Rbp2} 
Before the rest of step $n+1$, we describe the goals which we want to achieve after the step:

\begin{enumerate}
\item  the partial function $f_n$ corresponding to $\bigcup\limits_{i=0}^{n} M_i$ has controlled sizes of its cycles, and
\item the graph $\Gamma^{(n+1)}$ obtained at the end of the step is $U^{(n+1)}$-reflected and the corresponding graph $\Gamma^{(n+1)\star}$ satisfies Hall's $d$-harem condition.
\end{enumerate}
In order to achieve the first condition we will organize one of the the following properties:
\begin{enumerate}[(i)]
\item there is a sequence of vertices $u^0_n,u^1_n, u^2_n,\ldots, u^j_n$, $1\leq j\leq n$, such that every edge $(u^i_n,v_{u^{i-1}_n})$ belongs to $M_n$, and for some $0\leq \ell \leq j-1$ the sequence $(u^{\ell}_n, u^{\ell +1}_n,\ldots, u^j_n)$ is a cycle;
\item there is a sequence of vertices $u^0_n, u^1_n, u^2_n,\ldots, u^j_n$, $j\leq n$, such that every edge $(u^i_n,v_{u^{i-1}_n})$ belongs to $M_n$, and $v_{u^j_n}$ is already adjacent to some edge from $\bigcup\limits_{i=0}^{n-1} M_i$;
\end{enumerate} 

\subsection{Step n+1, part 2}\label{nplus1p2}
We begin by checking whether $v_{u_n}$ belongs to $\Gamma^{(n)\perp}$. 
If $v_{u_n}\in \Gamma^{(n)\perp}$ then we denote $u_n$ by $u^0_n$ and begin the following process of choosing the consecutive vertices $u^i_n$. 

\begin{quote}
\textit{First step of iteration.} 
Assume that for some $j_0,i$ we have $v_{u_n}=v^{\perp}_{j_0,i}\in V^{(n)\perp}$.
Then we set 

$u^1_n= u^{\perp}_{j_0}$, $v^1_{n,k}:=v^{\perp}_{j_0,k}$, $k\le d-1$, 

$M^1_n:=\{(u^1_n,v^1_{n,k}): 1 \le k \le d-1 
\}$ \\ 
and check whether $v_{u^1_n}\in \Gamma^{(n)\perp}(-u^0_n)$. \\ 
If it is so we repeat the iteration for $v_{u^1_n}$. 
Note that $v_{u^1_n} \in \Gamma^{(n)\perp}(-u^0_n,-u^1_n )$
then. 
\end{quote} 

\bigskip 

\begin{quote}
\textit{Single step of iteration.} 
We verify if 
$v_{u^m_n}\in \Gamma^{(n)\perp}(-u^0_n,-u^1_n,\ldots, -u^{m}_n)$. 
If it is so then for some $j_{m},i$ we have $v_{u^m_n}=v^{\perp}_{j_{m},i}\in V^{(n)\perp}$. 
Define  

$u^{m+1}_n= u^{\perp}_{j_{m}}$,  $v^{m+1}_{n,k}:=v^{\perp}_{j_{m},k}$, $1\le k \le d-1$, 

$M^{m+1}_n:=\{(u^{m+1}_n,v^{m+1}_{n,k}): 1 \le k \le d-1 
\}$,  

and we repeat the iteration for $v_{u^{m+1}_n}$.
This ends the single iteration step.
\end{quote}

Since $|U^{(n)\perp}|\leq n$, the procedure ends after at most $n$ iterations. 
Therefore, one of the following cases is realized for some $l\leq n$: 

\begin{enumerate}
\item $v_{u^l_n}\notin \Gamma^{(n)}$;
\item $v_{u^l_n}\in \Gamma^{(n)}$, but $v_{u^l_n}\notin \Gamma^{(n)}(-u^0_n,-u^1_n,\ldots, -u^{l}_n)$ (this case is impossible for $l=0$);
\item $v_{u^l_n} \in \Gamma^{(n)\star}$ (and obviously $v_{u^l_n}\in \Gamma^{(n)}(-u^0_n,-u^1_n,\ldots, -u^{l}_n)$).
\end{enumerate}

In case (1) $v_{u^l_n}$ was already added to $M$ at preceding steps and condition (ii) described before this stage is satisfied.
We finish part 2 of step $n+1$ without a new cycle.

In case (2), the last iteration closes the cycle $(u^k_n,\ldots u^l_n)$. 
The length of this cycle is not greater than $l+1$.

In each of these two cases we skip part 3 of the step and go to the output of step $n + 1$, see
3.13. 
We set 
$M_n = \bigcup\limits^{l}_{k=0} M^k_n$ and obtain the graph $\Gamma^{(n+1)}$ from $\Gamma^{(n)}$ by removal of $M_n$-fans. 
Since for every $u \in U^{(n)} \setminus U^{(n+1)}$ the element $v_u$ is either removed as well or was not in $V^{(n)}$ from the
beginning, then the graph is $U^{(n+1)}$-reflected.

Case (3) is the most complicated one; part 3 of the step will be entirely dedicated to it. 
In this case, we finish part 2 of the step by taking a new term $u^{l+1}_n := u_{v^l_{n,1}}$. 
In part 3 we will force the cycle $(u^{l}_n,u^{l+1}_n)$ of length $2$.

\bigskip 

Figure \ref{fig:part2} illustrates the algorithm of the second part of the step.

\begin{figure}[ht]
  \centering
    \includegraphics[width=1\textwidth]{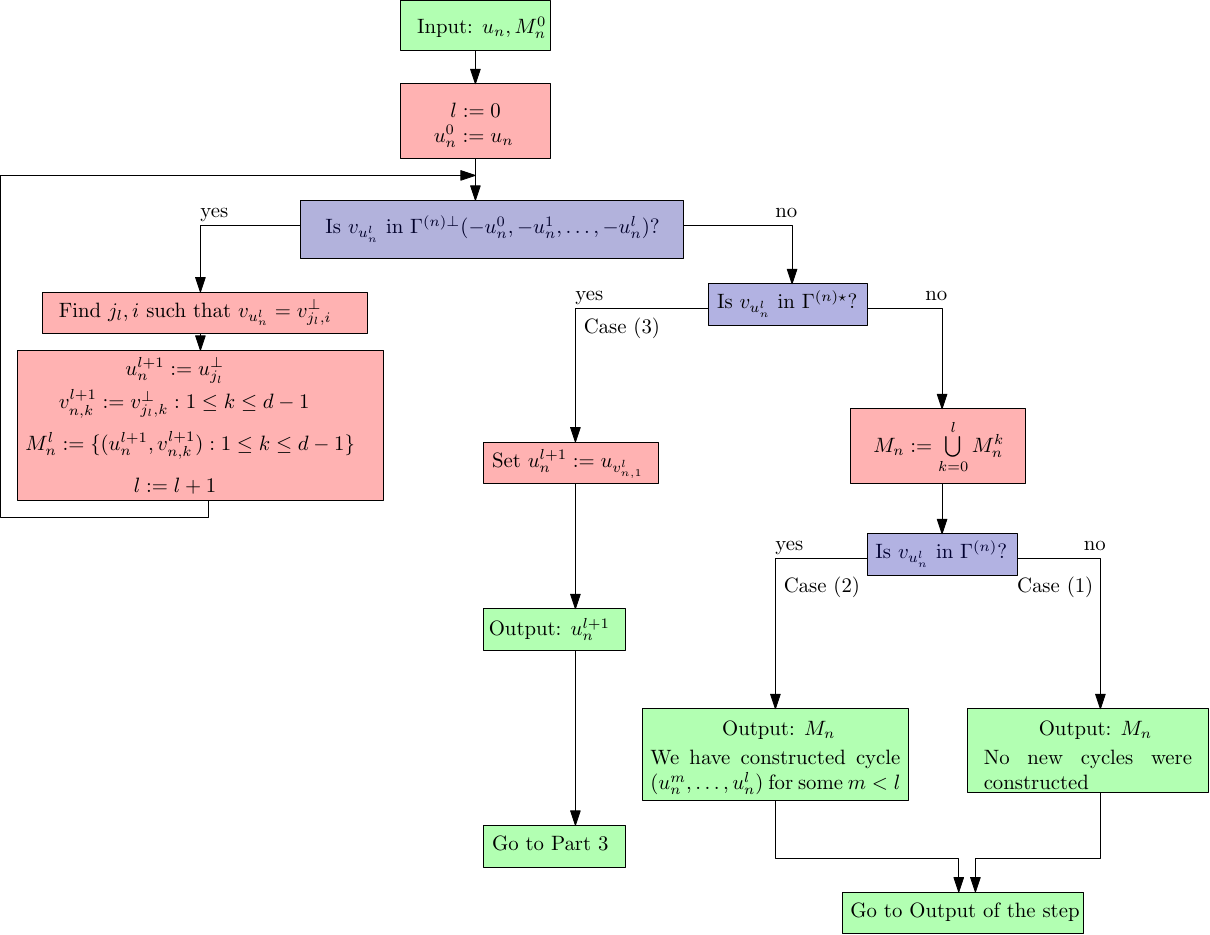}
    \caption{Detailed version of the algorithm of part 2 of step $n+1$.}\label{fig:part2}
\end{figure}

\pagebreak

\subsection{Example}  \label{cycllemma}
Before we move to the third part of the step, we give an example of a cycle obtained by the algorithm in case (2) of this part of the step. The following pictures show how a cycle of length $3$ arises in this procedure when the graph satisfies Hall's $3$-harem condition.

\begin{figure}[H]
  \centering
    \includegraphics[width=0.8\textwidth]{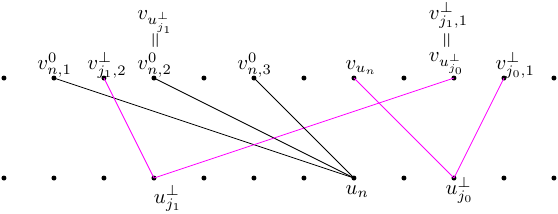}
    \caption{$\Gamma^{(n)\star}$ is black, $\Gamma^{(n)\perp}$ is purple.}
\end{figure}

\begin{figure}[H]
  \centering
    \includegraphics[width=0.8\textwidth]{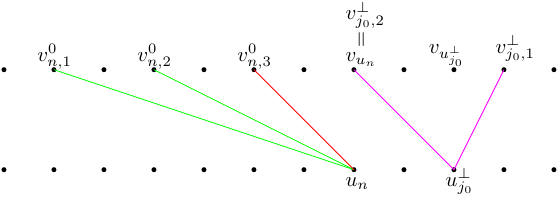}
    \caption{$\mathfrak{M}^n_1$ is red and green, $M_n^0$ is green. We have $v_{u_n}=v^{\perp}_{j_0,2}$.}
\end{figure}

\begin{figure}[H]
  \centering
    \includegraphics[width=0.8\textwidth]{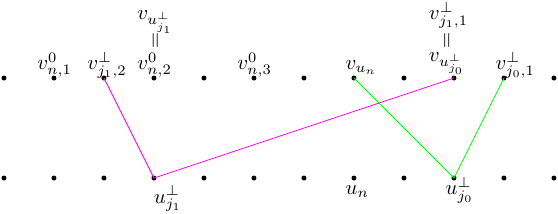}
    \caption{$M_n^1$ is green. We have $v_{u^{\perp}_{j_1}}=v^{\perp}_{j_2,2}$. Moreover $v_{u_{j_1}^{\perp}}=v^0_{n,2}$.}
\end{figure}

To demonstrate the cycle in a single picture, we take 4 copies of $\mathbb{N}$ as rows. 
The first and the second ones correspond to  $V$ and $U$ after part 1 of the step. 
The second and the third rows correspond to the sets $V$ and $U$ after the first iteration step. 
The third and the fourth rows correspond to $U$ and $V$ after the second step of the iteration.

\begin{figure}[H]
  \centering
    \includegraphics[width=0.8\textwidth]{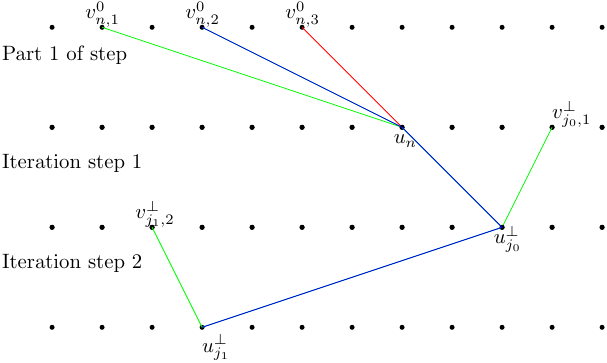}
    \caption{The final $M_n$ with the obtained cycle of length $3$ marked in blue. The red edge is the edge from $\mathfrak{M}^n_1$ that was not added to $M_n^0$.}
\end{figure}

\subsection{Step $n+1$, part 3. }  
Case (3) of part 2 guarantees that the edge $(u^l_n,v^l_{n,1})$ is in $\Gamma^{(n)}$ 
and $v_{u^l_n} \in \Gamma^{(n)}(-u_n,-u^1_n,\ldots,-u^l_n)$. 
Thus, applying $U^{(n)}$-reflectedness of $\Gamma^{(n)}$ we see $(u^{l+1}_n,v_{u^l_n})\in\Gamma^{(n)}(-u_n,-u^1_n,\ldots,-u^l_n)$. 
Observe that since in part 2 the elements $u^{\perp}_{j_1},\ldots, u^{\perp}_{j_l}$ were chosen in $U^{(n)\perp}$, 
$$
\Gamma^{(n)}(-u_n,-u^1_n,\ldots,-u^l_n) \cap\Gamma^{(n)\star}=\Gamma^{(n)}(-u_n)\cap\Gamma^{(n)\star}=\Gamma^{(n)\star}(-u_n).
$$ 
By part 1 of this step, the graph $\Gamma^{(n)\star}(-u_n)$ satisfies Hall's $d$-harem condition. 
Thus, we can find a perfect $(1,d)$-matching $\mathfrak{M}^2_n$ in $\Gamma^{(n)\star}(-u_n)$.
Let us fix it. 

\begin{quotation} 
We now check whether there is $j$ with $u^{l+1}_n=u^{\perp}_j$. 
\\ 
If it exists, we set 
$\dot{v}^{l+1}_{n,i}:=v^{\perp}_{j,i}$ for $1\leq i\leq d-1$. \\ 
If there is no $j$ such that $u^{l+1}_n=u^{\perp}_j$,  
then $\dot{v}^{l+1}_{n,1}\ldots  \dot{v}^{l+1}_{n,d}$ will denote the elements adjacent to $u^{l+1}_n$ under $\mathfrak{M}^2_n$. 

There are two cases:

\begin{enumerate}[label={\Alph*)}]
\item $(u^{l+1}_n,v_{u^l_n})\in \mathfrak{M}^2_n$, i.e.  $v_{u^l_n}=\dot{v}^{l+1}_{n,k}$ for some $1\leq k \leq d$. 
In this case $u^{l+1}_n$ cannot be  $u^{\perp}_j$ for any $j$.\label{caseA}

\item $(u^{l+1}_n,v_{u^l_n})\notin \mathfrak{M}^2_n$, i.e. there exists some $u\in \Gamma^{(n)\star}(-u_n)$, such that $v_{u^l_n}=v_k$ for some $1\leq k \leq d$, where $v_1\ldots v_d$ denote the elements adjacent to $u$ under $\mathfrak{M}^2_n$. 
In this case it is possible that $u^{l+1}_n$ coincides with some $u^{\perp}_j$. \label{caseB}
\end{enumerate}

In either case we produce a cycle of length 2 by including the pair $(u^{l+1}_n,v_{u^{l}_n})$ into $M^{l+1}_n$. 
In fact we include it into $M^{l+1}_n$ together with a fan with the root $u^{l+1}_n$ and $(d-2)$ leaves taken among $\dot{v}^{l+1}_{n,i}$. 
To be precise we organize it as follows. 

In case A) if $k=d$, we set $v^{l+1}_{n,i-1}:=\dot{v}^{l+1}_{n,i}$ for $2\leq i\leq d$.
If $k\neq d$ we set $v^{l+1}_{n,i}:=\dot{v}^{l+1}_{n,i}$ for $1\leq i\leq d-1$. 
The set $M^{l+1}_n$ consists of edges $(u^{l+1}_n,v^{l+1}_{n,i})$ for $1\leq i\leq d-1$. 
The procedure is finished.

In case \ref{caseB}, $v^{l+1}_{n,1}:=v_{u^{l}_n}$ and $v^{l+1}_{n,i+1}:=\dot{v}^{l+1}_{n,i}$, $1 \le i \le d-2$. 
The set $M^{l+1}_n$ consists of edges $(u^{l+1}_n,v^{l+1}_{n,i})$ for $1\leq i\leq d-1$. 
We define $\dot{u}_n^{\perp}:=u$ and rename the remaining $d-1$ vertices $v_j$ to $\dot{v}^{\perp}_{n,i}$.
The procedure is finished. 

It is worth noting here that if $u^{l+1}_n$ coincides with some $u^{\perp}_j$, then the vertex $\dot{v}^{l+1}_{n,d}$ does not exist, i.e the only vertex $\dot{v}^{l+1}_{n,d-1}$ from the fan of $u^{l+1}_n$ would be outside of $M^{l+1}_n$. 
\vspace{0.5cm}
\end{quotation}

Figure \ref{fig:part3} illustrates part 3 of the algorithm. 

\begin{figure}[ht]  \centering
    \includegraphics[width=1\textwidth]{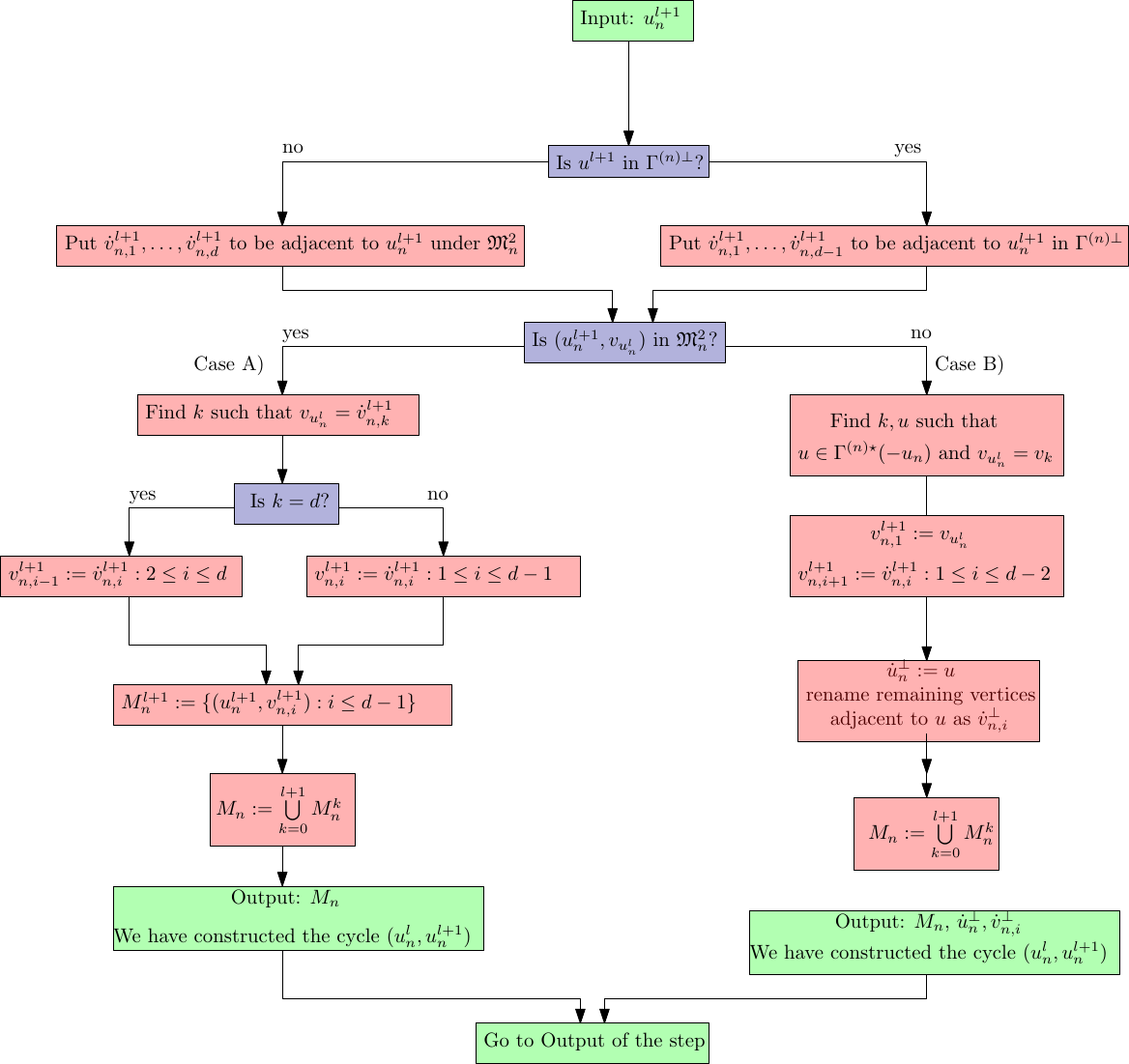}
    \caption{Detailed version of the algorithm used at the part 3 of the step $n+1$.}\label{fig:part3}

\end{figure}

\pagebreak 

Let $M_n=\bigcup\limits_{k=0}^{l+1} M^k_n$.
We obtain the graph $\Gamma^{(n+1)}$ from $\Gamma^{(n)}$ by removal of $M_n$-fans.
Since for each $u \in U^{(n)}\setminus U^{(n+1)}$ the element $v_u$ is also removed, then the graph is $U^{(n+1)}$-reflected. 

\subsection{Output of the step} \label{otpt} 
We have already defined $M_n$ and $\Gamma^{(n+1)}$. 
It remains to produce graphs $\Gamma^{(n+1)\star},\Gamma^{(n+1)\perp}$. 
For this purpose, we define auxiliary graphs 
$$ 
\mathfrak{T}=(\mathfrak{U},\mathfrak{V}) \mbox{ and }\dot{\Gamma}^{(n)\perp} = (\dot{U}^{(n)\perp},\dot{V}^{(n)\perp}).
$$

In cases (1), (2) and (3)A) of parts 2 and 3 we set $\mathfrak{U}:=U^{(n+1)}\setminus U^{(n)\perp}, \mathfrak{V}:=V^{(n+1)}\setminus V^{(n)\perp}$, and $\dot{\Gamma}^{(n)\perp}:=\Gamma^{(n)\perp}\cap \Gamma^{(n+1)}$.
Note here that in cases (1) and (2), in part 2 of the step, only elements of $\Gamma^{(n)\perp}$ were added to $M_n$. 
Therefore in these cases $\mathfrak{T}$ coincides with  $\Gamma^{(n)\star}(-u_n)$ and satisfies Hall's $d$-harem condition.

In case (3)B) we set 
\begin{itemize} 
\item $\mathfrak{U}:=U^{(n+1)}\setminus (U^{(n)\perp}\cup \{\dot{u}_n^{\perp}\})$, 
\item $\mathfrak{V}:=V^{(n+1)}\setminus (V^{(n)\perp} \cup \{\dot{v}_{n,i}^{\perp}: 1\leq i\leq d-1\})$,  
\end{itemize} 
unless $u^{l+1}_n$ coincides with some $u^{\perp}_j$. 
In the latter case, $\dot{v}^{l+1}_{n,d-1}$ is the only vertex left from the fan of $u^{\perp}_j$ in $\Gamma^{(n)\perp}$. 
\footnote{Furthermore, the vertex $\dot{v}^{l+1}_{n,d}$ does not exist, i.e the only vertex from the fan of $u^{l+1}_n$ that is outside of $M^{l+1}_n$ is $\dot{v}^{l+1}_{n,d-1}$.}
Consequently, we define 

$\bullet \, \mathfrak{V}:= (V^{(n+1)}\cup \{ \dot{v}^{l+1}_{n,d-1}\})\setminus (V^{(n)\perp} \cup \{\dot{v}_{n,i}^{\perp}: 1\leq i\leq d-1\})$.

We define $\dot{U}^{(n)\perp}$ and  $\dot{V}^{(n)\perp}$ accordingly: 
\begin{itemize} 
\item $\dot{U}^{(n)\perp}:=(U^{(n)\perp}\cup \{\dot{u}_n^{\perp}\})\cap U^{(n+1)}$, 
\item  $\dot{V}^{(n)\perp}:=(V^{(n)\perp}\cup \{\dot{v}_{n,k}^{\perp}:1\leq k \leq d-1\})\cap V^{(n+1)}$,  
\end{itemize} 

The following claim follows from Lemma \ref{2ndsim} below.
\begin{clm}\label{cs1p2}
At least one of the following holds:
\begin{itemize}
\item $\mathfrak{T}$ satisfies Hall's $d$-harem condition;
\item there exists some vertex $u_j^{\perp}\in \dot{U}^{(n)\perp}$ such that the graph $\mathfrak{T}(+u_j^{\perp})$ satisfies Hall's $d$-harem condition.
\item there exist vertices $u_i^{\perp},u_j^{\perp}\in \dot{U}^{(n)\perp}$ such that the graph $\mathfrak{T}(+u_i^{\perp},+u_j^{\perp})$ satisfies Hall's $d$-harem condition.
\end{itemize}
\end{clm}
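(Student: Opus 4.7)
The plan is to reduce the claim to a case analysis based on which subcase of Part 2 was realized, with the main work concentrated in case (3)B). In every subcase, the anchor is the fact from Part 1 that $\Gamma^{(n)\star}(-u_n)$ satisfies Hall's $d$-harem condition.

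Cases (1) and (2) are already handled by the remark in the text: beyond $M_n^0$ only vertices of $\Gamma^{(n)\perp}$ are added to $M_n$, and these vertices are precisely the ones subtracted when passing from $\Gamma^{(n+1)}$ to $\mathfrak{T}$; hence $\mathfrak{T}=\Gamma^{(n)\star}(-u_n)$ and the first bullet of the claim is automatic.

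In case (3)A), $\mathfrak{T}$ is obtained from $\Gamma^{(n)\star}(-u_n)$ by deleting $u^{l+1}_n$ together with $d-1$ of its leaves in the $(1,d)$-matching $\mathfrak{M}^2_n$ (one $\mathfrak{M}^2_n$-leaf stays in $\mathfrak{V}$). This is the deletion of a single $(1,d-1)$-sub-fan of a perfect matching, the type of modification treated in Lemma \ref{1stsim}: either Hall's $d$-harem condition survives for $\mathfrak{T}$ (first bullet) or the reintroduction of one perp fan from $\dot U^{(n)\perp}$ restores it (second bullet). In case (3)B) two $\mathfrak{M}^2_n$-fans are simultaneously perturbed: the fan of $u^{l+1}_n$ loses $d-2$ leaves to $M_n^{l+1}$, and the fan of $\dot u_n^\perp=u$ loses the leaf $v_{u^l_n}$ to $M_n^{l+1}$ while its remaining $d-1$ leaves become the new $\dot v^\perp_{n,i}$'s in $\dot V^{(n)\perp}$. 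I would then apply Lemma \ref{2ndsim}, which is precisely the statement that removing two fans of a perfect $(1,d)$-matching from a graph satisfying Hall's $d$-harem condition can be repaired by reintroducing at most two perp fans; this delivers all three bullets of the claim.

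The main obstacle is the combinatorial content of Lemma \ref{2ndsim}. I expect the argument to proceed by contradiction: assume that no choice of one or two perp fans restores Hall's condition. For each candidate choice of $u_i^\perp,u_j^\perp\in\dot U^{(n)\perp}$ there is then a witnessing finite set $X\subset\mathfrak{U}$ or $Y\subset\mathfrak{V}$ violating the corresponding inequality, and a minimality argument reduces attention to minimal witnesses. The two removed $\mathfrak{M}^2_n$-fans decrease $|N(X)|$ by at most $2(d-1)$, bounding the deficit of any single witness, and a carefully chosen perp fan absorbs this deficit. The real subtlety is the combined case: if witnesses appear on both sides or two disjoint $X$-witnesses occur, one must argue that two perp fans can be chosen to repair both simultaneously without creating new violations. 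Here $\dot u_n^\perp$ itself is a natural first candidate, since $\mathfrak{T}(+\dot u_n^\perp)$ essentially undoes the removal of $u$'s fan and reduces the analysis to the simpler single-fan situation handled in case (3)A).
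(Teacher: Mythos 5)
Your overall architecture matches the paper's: cases (1) and (2) give $\mathfrak{T}=\Gamma^{(n)\star}(-u_n)$ directly, and the substance is a ``repair'' lemma for cases (3)A) and (3)B) --- this is exactly Lemma \ref{2ndsim}, which is all the paper itself says about Claim \ref{cs1p2}. But your sketch of how that lemma is proved has a genuine gap at its core. The mechanism you propose --- the two perturbed $\mathfrak{M}^2_n$-fans cost at most $2(d-1)$ neighbours, and ``a carefully chosen perp fan absorbs this deficit'' --- does not work as a counting argument: adding a perp root $u_j^{\perp}$ adjacent to a witness $X$ increases $|N(X)|$ by only $1$ while simultaneously enlarging $\mathfrak{V}$ by $d-1$ new leaves, so nothing is ``absorbed'' and new violations can appear. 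The paper closes this gap with two specific ingredients you do not supply: (a) Claim \ref{gn1}, i.e.\ Lemma \ref{neighsize} applied to the $U^{(n+1)}$-reflected graph $\Gamma^{(n+1)}$, which yields $|N_{\Gamma^{(n+1)}}(X)|\geq\frac{1}{d}|X|$ for every $X\subset V^{(n+1)}$ and hence forces a perp root into $N(X)$ for any minimal connected witness; and (b) Lemma \ref{access}, which produces an alternating path from that perp root to the offending leftover vertex $\dot{v}_i$ along which $\mathfrak{M}^2_n$ is rerouted into an explicit perfect $(1,d)$-matching of the repaired graph. Without (b), ``there is a perp neighbour of $X$'' does not yield Hall's condition for $\mathfrak{T}(+u_j^{\perp})$. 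You should also note that only $V$-side witnesses containing $\dot{v}_1$ or $\dot{v}_2$ can occur (since $\mathfrak{T}(-\dot{v}_1,-\dot{v}_2)$ already satisfies Hall's condition, Claim \ref{gn0}); your allowance for $U$-side witnesses is unnecessary.

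Two smaller points. First, in case (3)A) you propose to ``apply Lemma \ref{1stsim}''; that lemma is stated for the removal of the fan of $u_n$ chosen from $\mathfrak{M}^1_n$ in part 1, and its proof uses features specific to that situation (the role of $\mathfrak{v}$ and of $v_{u_n}$), so at best you can adapt its method, not invoke it. Second, your fallback for the two-witness case --- add $\dot{u}_n^{\perp}$ first because $\mathfrak{T}(+\dot{u}_n^{\perp})$ ``essentially undoes the removal of $u$'s fan'' --- is not sound as stated: the fan of $u=\dot{u}_n^{\perp}$ has lost the leaf $v_{u^l_n}$ to $M^{l+1}_n$, so re-adding $u$ with its remaining $d-1$ leaves does not restore a $(1,d)$-fan and does not reduce to the single-fan case. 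The paper instead handles $\dot{v}_1$ and $\dot{v}_2$ sequentially (Parts 1 and 2 of the proof of Lemma \ref{2ndsim}), repairing the matching once for each and letting the witness sets, not $\dot{u}_n^{\perp}$, dictate which perp fans are reinstated.
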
 
Now, depending on the output of the claim above we define graphs $\Gamma^{(n+1)\star},\Gamma^{(n+1)\perp}$.

In case (3)B) (i.e. $\dot{u}_n^{\perp}$ exists) if  $u_i^{\perp}\neq \dot{u}_n^{\perp}\neq u_j^{\perp}$ (for any output of the claim), we set $u_n^{\perp}:=\dot{u}_n^{\perp}$ and $v_{n,k}^{\perp}:= \dot{v}_{n,k}^{\perp}$ for $1\leq k\leq d-1$. 
Otherwise, or in the remaining cases $u_n^{\perp},v_{n,k}^{\perp}$ are not defined.

In the first case of the claim we set $\Gamma^{(n+1)\star}:=\mathfrak{T}$ and \begin{itemize} 
\item $U^{(n+1)\perp}:= (U^{(n)\perp}\cup\{u_n^{\perp}\})\cap U^{(n+1)},$
\item $V^{(n+1)\perp}:=((V^{(n)\perp}\setminus\{\dot{v}^{l+1}_{n,d-1}\})\cup \{v_{n,k}^{\perp}: 1\leq k\leq d-1\})\cap V^{(n+1)}.$
\end{itemize} 
In the second case we set $\Gamma^{(n+1)\star}:=\mathfrak{T}(+u_j^{\perp})$ and 
\begin{itemize} 
\item $U^{(n+1)\perp}:= ((U^{(n)\perp}\setminus\{u_j^{\perp}\})\cup \{u_n^{\perp}\})\cap U^{(n+1)},$
\item $V^{(n+1)\perp}:=(((V^{(n)\perp}\setminus\{\dot{v}^{l+1}_{n,d-1}\})\setminus \{v_{j,k}^{\perp}: 1\leq k\leq d-1\})\cup \{v_{n,k}^{\perp}: 1\leq k\leq d-1\})\cap V^{(n+1)}.$
\end{itemize} 
In the third case we set $\Gamma^{(n+1)\star}:=\mathfrak{T}(+u_i^{\perp},+u_j^{\perp})$ and 
\begin{itemize} 
\item $U^{(n+1)\perp}:= ((U^{(n)\perp}\setminus \{u_i^{\perp},u_j^{\perp}\})\cup\{u_n^{\perp}\})\cap U^{(n+1)},$
\item $V^{(n+1)\perp}:=(((V^{(n)\perp}\setminus\{\dot{v}^{l+1}_{n,d-1}\})\setminus \{v_{i,k}^{\perp},v_{j,k}^{\perp}: 1\leq k\leq d-1\})\cup \{v_{n,k}^{\perp}: 1\leq k\leq d-1\})\cap V^{(n+1)}.$ 
\end{itemize}

\section{General Lemmas}\label{lemmastechnic}

The construction presented in Section 3 is supported by the lemmas of Section 5. 
In order to prove them, we need some additional observations of slightly more general character. 
This is the purpose of this section. 
We warn the reader that the notation used here does not coincide with that of Section 3.
\begin{itemize} 
\item Throughout this section $\Gamma=(U,V,E)$ denotes a bipartite graph  \\ 
and $\Gamma^{\perp}=(U^{\perp},V^{\perp},E^{\perp})$ denotes its subgraph. \\ 
The graph $\Gamma^{\star}=(U^{\star},V^{\star},E^{\star})$ is an induced subgraph of $\Gamma$ such that 
$$
U^{\star}\cap U^{\perp}=\emptyset = V^{\star}\cap V^{\perp}.  
$$ 
\item Below we always assume that $d$ is a natural number greater than $1$. 
\end{itemize}  
The following situation will arise several times in our arguments in Section \ref{lemmassimplecase}. 
Let $X$ be a subset of $V$ such that  
$$|N_{\Gamma}(X)|\geq \frac{1}{d}|X|,$$
but
$$
|N_{\Gamma}(X) \setminus U^{\perp}|<\frac{1}{d}|X|.
$$
Thus we can conclude that $N_{\Gamma}(X)\cap U^{\perp}\neq \emptyset$.

The following lemma describes typical circumstances which lead to this situation.

\begin{lm}\label{neighsize}
Let $\Gamma=(U,V,E)$ be $U$-reflected and  
let $\Gamma^{\star}$ be a subgraph of $\Gamma$ induced by the sets of vertices $U^{\star}=U\setminus U^{\perp}$, $V^{\star}=V\setminus V^{\perp}$.
Assume that $\Gamma^{\star}$ satisfies Hall's $d$-harem condition, and assume that for each $Y\subset U^{\perp}$ we have 
$|N_{\Gamma}(Y) \cap V^{\perp}|\geq (d-1)|Y|$.

Then for any $X\subseteq V$ we have $|N_{\Gamma}(X)|\geq (d-1)|X|$.
In particular $|N_{\Gamma}(X)|\geq \frac{1}{d}|X|$
\end{lm}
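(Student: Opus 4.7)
Fix $X\subseteq V$ and decompose it as $X = X^\star\sqcup X^\perp$ with $X^\star := X\cap V^\star$ and $X^\perp := X\cap V^\perp$. Since $U^\star\cap U^\perp=\emptyset$, the neighborhood splits disjointly as $N_\Gamma(X) = (N_\Gamma(X)\cap U^\star)\sqcup(N_\Gamma(X)\cap U^\perp)$, so it suffices to bound each summand separately.

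For the $U^\star$-part, $\Gamma^\star$ is induced on $(U^\star,V^\star)$ and satisfies Hall's $d$-harem condition, so the inclusion $N_{\Gamma^\star}(X^\star)\subseteq N_\Gamma(X)\cap U^\star$ immediately gives $|N_\Gamma(X)\cap U^\star|\geq \tfrac{1}{d}|X^\star|$.

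For the $U^\perp$-part, I would use the hypothesis $|N_\Gamma(Y)\cap V^\perp|\geq (d-1)|Y|$ for $Y\subseteq U^\perp$, which is a Hall-type condition from the $U^\perp$-side of the restricted bipartite graph $H:=(U^\perp, V^\perp, E\cap(U^\perp\times V^\perp))$. Applying the locally finite version of Hall's harem theorem to $H$ produces a left-perfect $(1,d-1)$-matching $M_H$ of $U^\perp$ into $V^\perp$; distributing the vertices of $X^\perp$ across its $(d-1)$-element fans by pigeonhole yields at least $|X^\perp\cap \operatorname{Im}(M_H)|/(d-1)$ distinct roots in $N_\Gamma(X^\perp)\cap U^\perp$. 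For vertices of $X^\perp$ outside this image, $U$-reflectedness of $\Gamma$---via the copies $u_v\in U$ of $v\in V$---is available to supply additional neighbors in $U$. Summing the two disjoint contributions gives the bound $|N_\Gamma(X)|\geq \tfrac{1}{d}|X|$, which is the ``in particular'' conclusion of the lemma.

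The main obstacle will be the $U^\perp$-part: the hypothesis is a one-sided Hall condition, and converting it into a lower bound on neighborhoods from the $V^\perp$-side requires first producing a matching via Hall and then accounting carefully for the vertices of $X^\perp$ lying outside its image, where $U$-reflectedness must intervene. The stronger inequality $|N_\Gamma(X)|\geq (d-1)|X|$ stated in the conclusion looks to require a more subtle combination of all three ingredients---the $d$-harem condition on $\Gamma^\star$, the $(d-1)$-condition on $U^\perp$, and $U$-reflectedness---and I would return to it only after the $\tfrac{1}{d}|X|$ bound (the form actually exploited in the applications) is secured.
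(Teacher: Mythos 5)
There is a genuine gap, and it comes from working on the wrong side of the bipartition. The paper's proof does not decompose $X$ inside $V$ at all: it uses $U$-reflectedness to replace $X$ by its set of copies $U_X=\{u\in U: v_u\in X\}$ (so $|U_X|=|X|$), splits $U_X$ into $U_X^\star=U_X\setminus U^\perp$ and $U_X^\perp=U_X\cap U^\perp$, and estimates the neighborhood \emph{from the $U$-side}, where the hypotheses are strong: Hall's condition on $\Gamma^\star$ gives $|N_{\Gamma^\star}(U_X^\star)|\geq d|U_X^\star|$ and the assumption on $U^\perp$ gives $|N_\Gamma(U_X^\perp)\cap V^\perp|\geq (d-1)|U_X^\perp|$; reflecting these (disjoint) neighborhoods back into $U$ yields $|N_\Gamma(X)|\geq d|U_X^\star|+(d-1)|U_X^\perp|\geq (d-1)|X|$. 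Your decomposition $X=X^\star\sqcup X^\perp$ stays on the $V$-side, where Hall's condition only gives the weak bound $|N_{\Gamma^\star}(X^\star)|\geq\frac{1}{d}|X^\star|$; this caps your $\Gamma^\star$ contribution at $\frac{1}{d}|X^\star|$ and makes the stated conclusion $|N_\Gamma(X)|\geq (d-1)|X|$ structurally unreachable by your route. The reflection trick you defer to the end is not an optional refinement for the stronger bound --- it is the engine of the whole proof.

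The $U^\perp$-part of your plan is also not sound as written. Invoking Hall's harem theorem to produce a left-perfect $(1,d-1)$-matching $M_H$ of $U^\perp$ into $V^\perp$ is unnecessary machinery, and it leaves exactly the hard case open: vertices of $X^\perp$ outside $\operatorname{Im}(M_H)$ (and indeed the matching need not be right-perfect, since the hypothesis is only one-sided). Saying that $U$-reflectedness ``is available to supply additional neighbors'' for those vertices is precisely the step that needs an argument; spelled out, it amounts to passing from $v$ to $u_v$ and back, i.e.\ to the paper's $U_X$ construction, at which point the matching $M_H$ and the pigeonhole count become superfluous. You would also need to check that the neighbors produced for $X^\perp$ are disjoint from those already counted for $X^\star$, which your sketch does not address; in the paper this disjointness is automatic because the two neighborhood sets land in $V^\star$ and $V^\perp$ respectively before being reflected back.
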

\begin{proof}
Let $U_X:=\{u\in U: v_u\in X\}$.
Since $\Gamma$ is $U$-reflected, $|U_X|=|X|$.
Consider the sets 
$$U_X^{\star}:=U_X\setminus U^{\perp}$$
and 
$$U_X^{\perp}:=U_X\cap U^{\perp}.$$  
Using $U$-reflectedness of $\Gamma$ again, we see  
$$
|N_{\Gamma}(X)|\geq |N_{\Gamma^{\star}}(U_X^{\star})|+|N_{\Gamma}(U^{\perp}_X)\cap V^{\perp}|.$$
Since Hall's $d$-harem condition is satisfied for any subset of $U^{\star}$, 
$$
|N_{\Gamma^{\star}}(U_X^{\star})|\geq d|U_X^{\star}|.
$$
Moreover we have $|N_{\Gamma}(U_X^{\perp})\cap V^{\perp}|\geq (d-1)|U_X^{\perp}|$.
Therefore
$$|N_{\Gamma^{\star}}(U_X^{\star})|+|N_{\Gamma}(U^{\perp}_X)\cap V^{\perp}|\geq (d-1)|U_X|.$$
Since $d\geq 2$, it follows that 
$$|N_{\Gamma}(X)|\geq (d-1)|X|\geq \frac{1}{d}|X|.$$

\end{proof}

The conditions of this lemma do not fit to the situation of Part 1 of the step of the main construction. 
Indeed, when some vertices from the graph are removed, we cannot guarantee that it is still $U$-reflected. 
On the other hand, it can happen that for some $v\in V$, the subgraph $\Gamma(-v)$ is $U$-reflected. 
This is the reason why in the following lemmas both $\Gamma^{\perp}$ and $\Gamma^{\star}$ are  subgraphs of some $\Gamma(-v)$.

\begin{lm}\label{neighsize2}
Let $v$ be a vertex from $V$ and 
$\Gamma(-v)=(U,V(-v))$. 
Assume that $\Gamma(-v)$ is $U$-reflected.
Let $\Gamma^{\perp}=(U^{\perp},V^{\perp},E^{\perp})$ be a subgraph of $\Gamma(-v)$ and let $\Gamma^{\star}$ be defined as the subgraph of $\Gamma(-v)$ induced by the sets of vertices 
$U^{\star}:=U\setminus U^{\perp},V^{\star}:=V(-v)\setminus V^{\perp}$.

Assume that $\Gamma^{\star}$ satisfies Hall's $d$-harem condition, and assume that for each $Y\subset U^{\perp}$ we have 
$|N_{\Gamma}(Y)\cap V^{\perp}|\geq (d-1)|Y|$.

Then for any $X \subseteq V\setminus\{v\}$ the inequality 
$|N_{\Gamma}(X)|\geq (d-1)|X|-1$ holds. \\ 
Moreover, the equality $|N_{\Gamma}(X)|=(d-1)|X|-1$ can happen only if $v\in N_{\Gamma}(U_X)$ and 
$N_{\Gamma^{\star}}(U_X)=\emptyset$, 
where $U_X =\{u\in U: v_u\in X\}$.  

\end{lm}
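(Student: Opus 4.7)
I mirror the reflection argument from the proof of Lemma~\ref{neighsize}, adapted to the weaker hypothesis that only $\Gamma(-v)$ (rather than $\Gamma$ itself) is $U$-reflected. The crucial bookkeeping observation is that $X \subseteq V\setminus\{v\}$ forces $u_v \notin U_X$, so every edge incident to a vertex of $U_X$ satisfies the side condition $v_{u'}\in V(-v)$ required to apply the $U$-reflectedness of $\Gamma(-v)$.

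Set $U_X:=\{u\in U : v_u\in X\}$; as $V$ embeds in the right copy of $U$, $|U_X|=|X|$. Split $U_X = U_X^{\star} \sqcup U_X^{\perp}$ with $U_X^{\star}:=U_X\setminus U^{\perp}$ and $U_X^{\perp}:=U_X\cap U^{\perp}$. For $v'\in N_{\Gamma^{\star}}(U_X^{\star})$, choose $u'\in U_X^{\star}$ with $(u',v')\in E$; since $v'\in V^{\star}\subseteq V(-v)$ and $v_{u'}\in V(-v)$, the $U$-reflectedness of $\Gamma(-v)$ applied to $(u',v')$ yields the reflected edge $(u_{v'},v_{u'})\in E$, whence $u_{v'}\in N_{\Gamma}(v_{u'})\subseteq N_{\Gamma}(X)$. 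The identical argument produces $u_{v'}\in N_{\Gamma}(X)$ for each $v'\in N_{\Gamma}(U_X^{\perp})\cap V^{\perp}$. The map $v'\mapsto u_{v'}$ is injective, and the two source sets are disjoint since $V^{\star}\cap V^{\perp}=\emptyset$, so
\[
|N_{\Gamma}(X)| \;\geq\; |N_{\Gamma^{\star}}(U_X^{\star})| + |N_{\Gamma}(U_X^{\perp})\cap V^{\perp}|.
\]
Hall's $d$-harem condition on $\Gamma^{\star}$ and the hypothesis on $U^{\perp}$ then give $|N_{\Gamma}(X)|\geq d|U_X^{\star}|+(d-1)|U_X^{\perp}|=(d-1)|X|+|U_X^{\star}|$, which is stronger than the required $(d-1)|X|-1$.

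For the moreover clause, the refined estimate $|N_{\Gamma}(X)|\geq(d-1)|X|+|U_X^{\star}|$ forces $|U_X^{\star}|=0$ (equivalently $N_{\Gamma^{\star}}(U_X)=\emptyset$) in any instance of equality $|N_{\Gamma}(X)|=(d-1)|X|-1$. The sole further deficit in the bound must then be absorbed by a neighbor of $U_X$ in $N_{\Gamma}(X)$ that the reflection injection fails to reach. The only systematically inaccessible such neighbor arises from an edge $(u',v)\in E$ with $u'\in U_X$, since such an edge cannot be reflected in $\Gamma(-v)$ (the target $v$ having been removed); hence $v\in N_{\Gamma}(U_X)$ is forced as well.

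\textbf{Main obstacle.} The only real subtlety is confirming that reflection in $\Gamma(-v)$ can be applied in every step, which rests entirely on the preparatory observation $v\notin X \Longrightarrow u_v\notin U_X$. Beyond that, the proof is a direct transcription of the argument of Lemma~\ref{neighsize}, with $\Gamma$ replaced by $\Gamma(-v)$ in the invocations of $U$-reflectedness.
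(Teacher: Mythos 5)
Your proof is correct, and its skeleton is the same as the paper's: split $U_X$ into $U_X^{\star}$ and $U_X^{\perp}$, bound the two neighborhoods by $d|U_X^{\star}|$ and $(d-1)|U_X^{\perp}|$ respectively, and transfer back to $N_{\Gamma}(X)$ by reflection, using $X\subseteq V\setminus\{v\}$ to guarantee $v_{u'}\in V(-v)$ for every $u'\in U_X$, so that $U$-reflectedness of $\Gamma(-v)$ is applicable. The difference lies in where the reflection is performed. The paper first establishes $|N_{\Gamma(-v)}(U_X)|\geq (d-1)|U_X|$ and then passes to $N_{\Gamma}(X)$ allowing a loss of $1$, attributed to the possibility $v\in N_{\Gamma}(U_X)$; the ``moreover'' clause records exactly when that loss could occur. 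You instead reflect each of $N_{\Gamma^{\star}}(U_X^{\star})$ and $N_{\Gamma}(U_X^{\perp})\cap V^{\perp}$ directly into $N_{\Gamma}(X)$; since both source sets lie in $V(-v)$ and every witnessing edge has its root in $U_X$, no edge is lost, and you get the sharper estimate $|N_{\Gamma}(X)|\geq (d-1)|X|+|U_X^{\star}|$. Under that estimate the equality $|N_{\Gamma}(X)|=(d-1)|X|-1$ simply cannot occur, so the ``moreover'' clause holds vacuously; this is still a valid proof of the statement as written, and I could not find a flaw in your stronger bound. Two remarks: first, be aware that your version trivializes the equality analysis that the paper later invokes inside the proof of Lemma \ref{1stsim} (there the author must rule out the equality case by a separate contradiction, which your bound renders unnecessary); second, your closing paragraph about a ``deficit absorbed by an inaccessible neighbor'' is vague and, given your own refined estimate, superfluous --- it would be cleaner to say outright that the equality case is empty, hence the ``moreover'' implication is vacuously true.
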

\begin{proof}
By $U$-reflectedness of $\Gamma(-v)$ we have $|U_X|=|X|$ and 
$$
|N_{\Gamma}(X)|+1\geq|N_{\Gamma(-v)}(U_X)|.
$$
Furthermore, if $v\notin N_{\Gamma}(U_X)$, then 
$$
|N_{\Gamma}(X)|\geq|N_{\Gamma(-v)}(U_X)|.
$$ 
Consider the sets 
$$
U_X^{\star}:=U_X\setminus U^{\perp} \, \mbox{ and } \, 
U_X^{\perp}:=U_X\cap U^{\perp}.
$$
Applying the argument of Lemma \ref{neighsize} to $\Gamma (-v)$ we obtain 
$$
|N_{\Gamma(-v)}(U_X)| \geq |N_{\Gamma^{\star}}(U_X^{\star})|+|N_{\Gamma}(U^{\perp}_X)\cap V^{\perp}| \geq (d-1)|U_X|.
$$
Thus 
$|N_{\Gamma}(X)|\geq |N_{\Gamma(-v)}(U_X)|-1\geq (d-1)|X|-1$.

Observe that $|N_{\Gamma(-v)}(U_X)|=(d-1)|U_X|$ only if $N_{\Gamma(-v)}(U_X)\subseteq V^{\perp}$, i.e. $N_{\Gamma^{\star}}(U_X)=\emptyset$ (since Hall's $d$-harem condition is satisfied for any subset of $U^{\star}$). 
If $v\notin N_{\Gamma}(U_X)$, then 
$$|N_{\Gamma}(X)|\geq |N_{\Gamma(-v)}(U_X)| \geq (d-1)|X|.$$
Therefore $|N_{\Gamma}(X)|=(d-1)|X|-1$ only if $v\in N_{\Gamma}(U_X)$ and $N_{\Gamma^{\star}}(U_X)=\emptyset$.

\end{proof}
The following definition will be useful in the proofs of the lemmas of the next section.
\begin{df}\label{acc}
Let $\Gamma:=(U,V,E)$ be a bipartite graph.  Assume that 
$\Gamma^{\star}=(U^{\star},V^{\star},E^{\star})$ is a subgraph of $\Gamma$ satisfying Hall's $d$-harem condition, and $M$ is a perfect $(1,d)$-matching in $\Gamma^{\star}$.

Let $X$ be a subset of $V$ and let $x\in X$.
We say that $x$ \textit{ is accessible from } $y\in N_{\Gamma}(X)$ \textit{ through } $X$ \textit{ by matching } $M$, (denoted by 
$y \xhookdoubleheadrightarrow{M,X} x$) if there exist two sequences of vertices $\{v'_0,\ldots,v'_n\}\subseteq X$ and 
$\{u'_0,\ldots,u'_{n-1}\}\subseteq N_{\Gamma}(X)$ such that 
\begin{itemize}
\item $v'_n=x$;
\item $(u'_i,v'_{i})\in M$ and $(u'_{i},v'_{i+1})\in E\setminus M$, where $i <n$;
\item $(y,v'_0)\in E$.
\end{itemize}
\end{df}

\begin{figure}[h]
  \centering
    \includegraphics[width=0.7\textwidth]{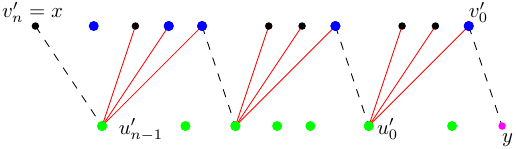}
	\caption{$x$ is accessible from $y$ by matching $M$ (red), the elements of $X$ are  blue and the elements of $N_{\Gamma}(X)$ are green}
\end{figure}

\begin{lm}\label{access}
Let $\Gamma=(U,V,E)$ be a bipartite graph. 
Assume that $\Gamma^{\star}=(U^{\star},V^{\star},E^{\star})$ is a subgraph of $\Gamma$ satisfying Hall's $d$-harem condition, and $M$ is a perfect $(1,d)$-matching in $\Gamma^{\star}$.

Assume that 
\begin{itemize}
\item $\widehat{v}\in N_{\Gamma}(U^{\star})\setminus V^{\star}$;
\item $X$ is a minimal connected subset in $V^{\star}\cup \{\widehat{v}\}$ with $|N_{\Gamma}(X)|< \frac{1}{d}|X|$;
\item $\widehat{u}\in N_{\Gamma}(X)\setminus U^{\star}$. 
\item $(\widehat{u},\widehat{v})\notin E$. 
\end{itemize}
Then $\widehat{u} \xhookdoubleheadrightarrow{M,X} \widehat{v}$.
\end{lm}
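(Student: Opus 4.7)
I would argue by contradiction, assuming $\widehat{v}$ is not accessible from $\widehat{u}$ through $X$ by $M$ and aiming to show $|N_{\Gamma}(X)| \geq (|X|-1)/d + 1$, which is incompatible with the hypothesis $|N_{\Gamma}(X)| < |X|/d$ whenever $d \geq 2$. Let $R \subseteq X$ be the set of vertices in $X$ accessible from $\widehat{u}$ through $X$ by $M$. Because $\widehat{u} \in N_{\Gamma}(X)$ while $(\widehat{u},\widehat{v}) \notin E$, $\widehat{u}$ has at least one neighbour $v' \in X \setminus \{\widehat{v}\}$; the length-zero ``path'' $(\widehat{u}, v')$ then places $v'$ in $R$, so $R \neq \emptyset$. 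By the contradiction hypothesis $\widehat{v} \notin R$, and $X \subseteq V^{\star} \cup \{\widehat{v}\}$ then forces $R \subseteq V^{\star}$. Writing $S := X \setminus R$, we have $\widehat{v} \in S$ and $S \setminus \{\widehat{v}\} \subseteq V^{\star}$.

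The core step is a partner count using the perfect $(1,d)$-matching $M$. For each $v \in R$ there is a unique $M$-partner $u_v \in U^{\star}$; put $U_R := \{u_v : v \in R\}$, and analogously $U_{S'} := \{u_w : w \in S \setminus \{\widehat{v}\}\}$. Given $u \in U_R \cup U_{S'}$, let $a_u$ and $b_u$ denote the numbers of $M$-partners of $u$ lying in $R$ and in $S \setminus \{\widehat{v}\}$ respectively. Since $R$ and $S \setminus \{\widehat{v}\}$ are disjoint subsets of $V^{\star}$ and $u$ has exactly $d$ matches in $V^{\star}$, $a_u + b_u \leq d$. Summing over $u \in U_R \cup U_{S'}$ gives $\sum_u (a_u + b_u) = |R| + |S| - 1 = |X| - 1$, so
\[
|U_R \cup U_{S'}| \;\geq\; \frac{|X|-1}{d}.
\]

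Since $U_R \cup U_{S'} \subseteq N_{\Gamma}(X) \cap U^{\star}$ and $\widehat{u} \in N_{\Gamma}(X) \setminus U^{\star}$, these two pieces of $N_{\Gamma}(X)$ are disjoint, whence $|N_{\Gamma}(X)| \geq (|X|-1)/d + 1$. Combining with $|N_{\Gamma}(X)| < |X|/d$ yields $1 < 1/d$, contradicting $d \geq 2$, so $\widehat{v} \in R$ after all. The main obstacle is establishing the counting bound $|U_R \cup U_{S'}| \geq (|X|-1)/d$; the inequality $a_u + b_u \leq d$ powering it encodes precisely the $(1,d)$-matching structure of $M$ in $\Gamma^{\star}$, and the disjointness of $R$ and $S \setminus \{\widehat{v}\}$ in $V^{\star}$ is what guarantees that the matching slots of each partner cannot be double-counted. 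The minimality and connectedness of $X$ do not actually enter this route.
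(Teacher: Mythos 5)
Your counting is arithmetically sound, but it proves far too much: nothing in it depends on the non-accessibility assumption (the partition of $X\setminus\{\widehat v\}$ into $R$ and $S\setminus\{\widehat v\}$ plays no role in the sum $\sum_u(a_u+b_u)=|X|-1$), nor on the minimality or connectedness of $X$, as you yourself observe. What you have actually shown is that the hypotheses of the lemma, read literally, are jointly unsatisfiable: every $v\in X\setminus\{\widehat v\}\subseteq V^{\star}$ contributes its unique $M$-partner to $N_{\Gamma}(X)\cap U^{\star}$, giving at least $(|X|-1)/d$ neighbours there, and $\widehat u$ adds one more outside $U^{\star}$, so $|N_{\Gamma}(X)|\ge (|X|-1)/d+1>\frac{1}{d}|X|$ always. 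That would make the lemma vacuously true --- yet it is invoked non-vacuously, and repeatedly, in Lemmas \ref{1stsim} and \ref{2ndsim}. This should have been the signal that the printed condition $|N_{\Gamma}(X)|<\frac{1}{d}|X|$ is measured in the wrong graph: in both applications the set $X$ in fact satisfies $|N_{\Gamma}(X)|\ge\frac{1}{d}|X|$, and what is small is the neighbourhood seen inside the starred part (e.g.\ $|N_{\Gamma^{(n)\star}(-u_n)}(X)|<\frac{1}{d}|X|$); it is precisely the gap between these two counts that produces $\widehat u\in N_{\Gamma}(X)\setminus U^{\star}$ in the first place (compare the preamble of Section \ref{lemmastechnic}). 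Under that intended reading your ``$+1$'' from $\widehat u$ disappears, $(|X|-1)/d$ is perfectly compatible with being $<\frac{1}{d}|X|$, and your contradiction evaporates.

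The idea your route is missing is the one the paper uses: minimality of $X$. Let $X'\subseteq X$ be the set of accessible (or $\widehat u$-adjacent) vertices and suppose $\widehat v\notin X'$, so $X'\subseteq V^{\star}$ and $X'\neq\emptyset$. The $M$-partners $U_M(X')$ --- at least $\lceil |X'|/d\rceil$ of them, all lying in $U^{\star}$ --- are neighbours of $X$ but cannot be adjacent to any vertex of $X\setminus X'$, since such an edge would extend an accessibility chain one step further. Hence $|N(X\setminus X')|\le |N(X)|-\lceil |X'|/d\rceil<\frac{1}{d}|X\setminus X'|$, and $X\setminus X'$ (or one of its connected components) violates the minimal choice of $X$. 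Note that this argument only ever counts neighbours lying in $U^{\star}$, so it survives the correction of which graph the neighbourhood is taken in, whereas yours does not.
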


\begin{proof}
It is clear that $\widehat{v}\in X$. 
Let $X'$ denote the subset of all elements of $X$ that are either adjacent to $\widehat{u}$ or accessible from $\widehat{u}$ through $X$ by the matching $M$.  
Since $\widehat{u}\in N_{\Gamma}(X)$, $X'\neq \emptyset$. 
In order to get a contradiction, assume that $\widehat{v}\not\in X'$. 
We will show that 
$|N_{\Gamma}(X\setminus X')|< \frac{1}{d}|X\setminus X'|$. 

Let $|X'|=l$ and let 
$U_M (X')=\{ u\in U \, | \, (\exists v\in X') (u,v)\in M\}$. 
Since elements of $X\setminus X'$ are not accessible from $\widehat{u}$ through $X$ by the matching $M$, then 
$N_{\Gamma}(X \setminus X') \cap U_M (X') = \emptyset$. 

Using this we see that 
$$
|N_{\Gamma}(X\setminus X')|\leq |N_{\Gamma}(X)|-|U_M(X')|. 
$$
Since $M$ is a $(1,d)$-matching, each element of $U_M(X')$ can be matched with at most $d$ elements from $X'$. 
Therefore $|U_M(X')|\geq \lceil\frac{l}{d}\rceil$ and 
$$
|N_{\Gamma}(X\setminus X')|\leq |N_{\Gamma}(X)|-|U_M(X')|\leq |N_{\Gamma}(X) |-\lceil\frac{l}{d}\rceil<\frac{1}{d}(|X|-l)=\frac{1}{d}|X\setminus X'|. 
$$ 
So $|N_{\Gamma}(X\setminus X')|< \frac{1}{d}|X\setminus X'|$  and $X\setminus X'$ is smaller than $X$, a contradiction with the choice of the latter.
As a result $\widehat{v}\in X'$, and consequently $\widehat{u} \xhookdoubleheadrightarrow{M,X} \widehat{v}$.
\end{proof}

\section{Graphs constructed in parts 1 and 2 of each step satisfy Hall's $d$-harem condition}\label{lemmassimplecase}

In this section the notation is taken from the construction of Section 3.
\subsection{Claims  \ref{cs1p1}, \ref{csnp1}}
Before stating Lemma \ref{1stsim} (proving Claims \ref{cs1p1} and \ref{csnp1}) we recall some notation used in case (2) of the $n+1$-st step of the construction.

\begin{itemize}
\item $\Gamma^{(n)}$ is $U^{(n)}$-reflected;
\item $\Gamma^{(n)\star}$ is a subgraph of $\Gamma^{(n)}$ obtained by removal of $(1,d-1)$-fans with roots belonging to the set $U^{(n)\perp}$; 
\item $\Gamma^{(n)\star}$ satisfies $d$-harem condition, and $u_n \in \Gamma^{(n)\star}$;
\item $|U^{(n)\perp}|\leq n$;
\item $\mathfrak{M}^1_n$ is a $(1,d)$-matching in the graph $\Gamma^{(n)\star}$. 
\end{itemize}

\begin{lm}\label{1stsim}
For any $n$ one of the following statements holds:
\begin{itemize}
\item $\Gamma^{(n)\star}(-u_n)$ satisfies Hall's $d$-harem condition;
\item there exist some vertex $u_j^{\perp}\in U^{(n)\perp}$ such that the graph $\Gamma^{(n)\star}(-u_n,+u_j^{\perp})$ satisfies Hall's $d$-harem condition.
\end{itemize}
\end{lm}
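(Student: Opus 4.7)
The plan is to handle Hall's two conditions for $\Gamma^{(n)\star}(-u_n)$ separately and, if the second condition fails, to locate a perp fan whose re-attachment restores Hall via an augmenting-path argument.

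First I observe that Hall's condition $|N(X)|\geq d|X|$ for $X\subseteq U^{(n)\star}\setminus\{u_n\}$ is automatic. The matching $\mathfrak{M}^1_n$ assigns each $u\in X$ exactly $d$ partners in $V^{(n)\star}\setminus\{v^0_{n,1},\ldots,v^0_{n,d}\}$, and these partners remain in $V^{(n)\star}(-u_n)$, forcing $|N(X)|\geq d|X|$. Since adding any perp fan only enlarges $V$, this half of Hall also persists after passing to $\Gamma^{(n)\star}(-u_n,+u^{\perp}_j)$, so condition (i) is never the obstruction.

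If the dual condition holds in $\Gamma^{(n)\star}(-u_n)$ we are in the first case. Otherwise pick a minimal connected $Y\subseteq V^{(n)\star}(-u_n)$ with $|N(Y)|<|Y|/d$. Removing the single vertex $u_n$ from $U$ drops any neighborhood by at most $1$, and Hall for $\Gamma^{(n)\star}$ then forces $|N_{\Gamma^{(n)\star)}}(Y)|=\lceil|Y|/d\rceil$ and $u_n\in N_{\Gamma^{(n)\star}}(Y)$. Counting $\mathfrak{M}^1_n$-partners in $Y$ further shows that $Y$ must contain the unique unmatched vertex $v^{\circ}$ (the $u_n$-leaf kept in $V^{(n)\star}(-u_n)$) and that $|Y|\equiv 1\pmod d$.

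I next apply Lemma \ref{neighsize} to $\Gamma=\Gamma^{(n)}$ and $\Gamma^{(n)\star}$; the perp hypothesis holds because each $u^{\perp}_j$ roots a $(1,d-1)$-fan in $V^{(n)\perp}$. This yields $|N_{\Gamma^{(n)}}(Y)|\geq (d-1)|Y|$, and comparison with $|N_{\Gamma^{(n)\star)}}(Y)|=\lceil|Y|/d\rceil$ forces the existence of a perp root $u^{\perp}_j\in N_{\Gamma^{(n)}}(Y)\cap U^{(n)\perp}$ whenever $|Y|\geq 2$. To verify Hall in $\Gamma^{(n)\star}(-u_n,+u^{\perp}_j)$, I construct a perfect $(1,d)$-matching: if $(u^{\perp}_j,v^{\circ})\in E$, simply replace $u_n$'s fan in $\mathfrak{M}^1_n$ by $\{(u^{\perp}_j,v^{\circ})\}\cup\{(u^{\perp}_j,v^{\perp}_{j,i}):1\leq i\leq d-1\}$. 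Otherwise I invoke Lemma \ref{access} with $\widehat{v}=v^{\circ}$ and $\widehat{u}=u^{\perp}_j$ to obtain an $M$-alternating path from $u^{\perp}_j$ to $v^{\circ}$ through $Y$, flip it inside $\mathfrak{M}^1_n$, and attach the $d-1$ perp leaves of $u^{\perp}_j$ as its remaining fan partners. In either case a perfect $(1,d)$-matching of $\Gamma^{(n)\star}(-u_n,+u^{\perp}_j)$ appears, so Hall is immediate from Hall's Harem theorem.

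The main obstacle is the degenerate case $|Y|=1$ (in particular when $d=2$), where the Lemma \ref{neighsize} estimate is tight and no perp neighbor of $v^{\circ}$ need exist in $\Gamma^{(n)}$. Handling it requires exploiting the $U^{(n)}$-reflectedness: the neighbors of $v^{\circ}$ in $\Gamma^{(n)}$ mirror those of its $U$-copy $u_{v^{\circ}}$ in $V^{(n)}$, and one uses Hall in $\Gamma^{(n)\star}$ applied to $u_{v^{\circ}}$ (or the fact that $u_{v^{\circ}}\in U^{(n)\perp}$) to extract a perp root whose fan attachment still yields a perfect $(1,d)$-matching.
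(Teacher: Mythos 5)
Your overall strategy coincides with the paper's: reduce to a minimal connected witness $Y$ on the $V$-side that must contain the leftover leaf $\mathfrak{v}=v^{\circ}$ of $u_n$'s $\mathfrak{M}^1_n$-fan, use reflectedness to locate a perp root adjacent to $Y$, and then repair $\mathfrak{M}^1_n$ along an alternating path from Lemma \ref{access} to exhibit a perfect $(1,d)$-matching of $\Gamma^{(n)\star}(-u_n,+u_j^{\perp})$. For $|Y|\geq 2$ your route is even slightly cleaner than the paper's: you apply Lemma \ref{neighsize} directly to $\Gamma^{(n)}$ and compare $(d-1)|Y|$ with $|N_{\Gamma^{(n)\star}}(Y)|=\lceil |Y|/d\rceil$, whereas the paper detours through Lemma \ref{neighsize2} applied to $\Gamma^{(n)}(-u_n)$ with $v=v_{u_n}$ and an enlarged perp part; since perp roots are never removed by $(-u_n)$, your comparison does produce the required $u_j^{\perp}$, and this is legitimate. (Your aside that the left Hall condition ``persists'' after adding a perp fan is not automatic for sets containing $u_j^{\perp}$, but this is harmless since you end by constructing a full perfect $(1,d)$-matching.)

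The genuine gap is exactly the case you flag and then wave at: $Y=\{v^{\circ}\}$. Reflectedness alone gives only $|N_{\Gamma^{(n)}(-u_n)}(v^{\circ})|\geq |N_{\Gamma^{(n)}(-u_n)}(u_{v^{\circ}})|-1\geq (d-1)-1=d-2$, which is $0$ when $d=2$: if $u_{v^{\circ}}\in U^{(n)\perp}$ and its unique perp leaf is $v_{u_n}$, then the reflected neighbour of $v^{\circ}$ is $u_n$ itself, which has been removed, and $v^{\circ}$ may be isolated in $\Gamma^{(n)}(-u_n)$ --- in which case neither alternative of the lemma can hold. This configuration is not excluded by Hall's condition or by reflectedness; it is excluded only by a specific safeguard built into part 1 of step $n+1$, case (2): the algorithm checks whether some leaf $v^0_{n,j}$ of the $\mathfrak{M}^1_n$-fan of $u_n$ satisfies $(u_{v^0_{n,j}},v_{u_n})\in\Gamma^{(n)\perp}$ and, if so, forces the edge $(u_n,v^0_{n,j})$ into $M^0_n$, so that this dangerous leaf cannot be the leftover $\mathfrak{v}$. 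The paper's proof of the singleton case consists precisely in showing that the two equalities $|N_{\Gamma^{(n)}(-u_n)}(u_{\mathfrak{v}})|=d-1$ and $|N_{\Gamma^{(n)}(-u_n)}(\mathfrak{v})|=|N_{\Gamma^{(n)}(-u_n)}(u_{\mathfrak{v}})|-1$ are jointly inconsistent \emph{because of that check}. Your sketch (``one uses Hall in $\Gamma^{(n)\star}$ applied to $u_{v^{\circ}}$ \ldots to extract a perp root'') never invokes this feature of the construction, and without it the claimed perp root need not exist; so the degenerate case is not actually proved.
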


\begin{rem}
If $U^{(n)\perp}=\emptyset$, then by Lemma \ref{1stsim} the graph $\Gamma^{(n)\star}(-u_n)$ satisfies Hall's $d$-harem condition. 
In particular, Claim \ref{cs1p1} holds.
\end{rem}

\begin{proof}
We know that the graph $\Gamma^{(n)\star}$ satisfies Hall's $d$-harem condition. 
Let $\mathfrak{v}$ denote the only vertex from the set $\{v^0_{n,1},\ldots ,v^0_{n,d}\}$ that belongs to $\Gamma^{(n)\star}(-u_n)$.
The choice of $u_n, v^0_{n,1},\ldots ,v^0_{n,d}$ ensures that the graph $\Gamma^{(n)\star}(-u_n,-\mathfrak{v})$ satisfies Hall's $d$-harem condition as well. 
For further arguments: 

$\bullet$ let $\mathfrak{M}$ denote a perfect $(1,d)$-matching in $\Gamma^{(n)\star}(-u_n,-\mathfrak{v})$. 

Since $U^{(n)\star}(-u_n)=U^{(n)\star}(-u_n,-\mathfrak{v})$, for $X\subset U^{(n)\star}(-u_n)$ we have 
\begin{equation}\label{Uhcond}
|N_{\Gamma^{(n)\star}(-u_n)}(X)|\geq |N_{\Gamma^{(n)\star}(-u_n,-\mathfrak{v})}(X)|\geq d|X|.
\end{equation}

The corresponding property also holds for all subsets of $V^{(n)\star}(-u_n)$ that do not contain $\mathfrak{v}$. 
Therefore if $\Gamma^{(n)\star}(-u_n)$ does not satisfy Hall's $d$-harem condition, then a witness of this is a finite 
$X \subset V^{(n)\star}(-u_n)$ which contains $\mathfrak{v}$.

If such $X$ is not connected, then the neighbourhood of $X$ is a disjoint union of the neighbourhoods of the connected subsets of $X$. 
Therefore there exists a minimal connected set $X$ such that $\mathfrak{v}\in X\subset V^{(n)\star}(-u_n)$ and $|N_{\Gamma^{(n)\star}(-u_n)}(X)|<\frac{1}{d}|X|$.
Choose $X$ with these properties. 
We will now prove the lemma into two steps.  \\ 
$\bullet$ {\em Step 1. There exists some} $u_j^{\perp}\in N_{\Gamma^{(n)}(-u_n)}(X)$. \\  
This is the main part of the proof of the lemma. 
In fact we will prove that 
\begin{equation}\label{0}
|N_{\Gamma^{(n)}(-u_n)}(X)|\geq \frac{1}{d}|X|.
\end{equation} 
Then the existence of the required $u^{\perp}_j$ follows from the inequality $|N_{\Gamma^{(n)\star}(-u_n)}(X)|<\frac{1}{d}|X|$. 

First, consider the case when 
$X=\{\mathfrak{v}\}$. 
The vertex $u_{\mathfrak{v}}$ either belongs to $U^{(n)\star}(-u_n)$ or to $U^{(n)\perp}$. 
By inequality (\ref{Uhcond}) and the fact that $U^{(n)\perp}$ consist of $(1,d-1)$-fans, in each case we have $|N_{\Gamma^{(n)}(-u_n)}(u_{\mathfrak{v}})|\geq d-1$.
Moreover, if the equality
\begin{equation}\label{11}
|N_{\Gamma^{(n)}(-u_n)}(u_{\mathfrak{v}})|= d-1
\end{equation} 
holds, then $v_{u_n}\not\in \Gamma^{(n)}$. 
Indeed, if $v_{u_n} \in \Gamma^{(n)}$, then $v_{u_n}\in N_{\Gamma^{(n)}(-u_n)}(u_{\mathfrak{v}})$ (by reflectedness).
On the other hand, the existence of $\mathfrak{M}$ together with equality (\ref{11}) implies that $(u_{\mathfrak{v}},v_{u_n})\in\Gamma^{(n)\perp}$. 
However, the first part of the $n+1$-st step of the construction implies that $(u_n,\mathfrak{v})$ should be added to the matching $M_n$ already: the element $\mathfrak{v}$ plays the role of $v^0_{n,j}$ in case (2) of the first part. 
In particular, $\mathfrak{v}\notin \Gamma^{(n)\star}(-u_n)$, a contradiction.

Our next observation is 
$$
|N_{\Gamma^{(n)}(-u_n)}(\mathfrak{v})| \geq |N_{\Gamma^{(n)}(-u_n)}(u_\mathfrak{v})|-1. 
$$
This follows by $U^{(n)}$-reflectedness of $\Gamma^{(n)}$: $v_{u_n}$ is the only possible element adjacent to $u_{\mathfrak{v}}$ that does not have the left copy in $U^{(n)}(-u_n)$.
Additionally, note that the equality 
\begin{equation}\label{22}
|N_{\Gamma^{(n)}(-u_n)}(\mathfrak{v})|= |N_{\Gamma^{(n)}(-u_n)}(u_\mathfrak{v})|-1
\end{equation}
holds only if $v_{u_n}\in \Gamma^{(n)}(-u_n)$.
 
Therefore equalities (\ref{11}), (\ref{22}) are not consistent, i.e.:
$$
|N_{\Gamma^{(n)}(-u_n)}(\mathfrak{v})|	 >  (d-1)-1 \mbox{ and, in particular, } 
|N_{\Gamma^{(n)}(-u_n)}(\mathfrak{v})| \geq \frac{1}{d}.
$$ 
Since $X$ is a singleton,
$$
|N_{\Gamma^{(n)}(-u_n)}(X)|\geq \frac{1}{d}|X|.
$$
As a result we have inequality (2) and the fact  that there is some 
$u_j^{\perp}\in N_{\Gamma^{(n)}(-u_n)}(X)$.  

Consider this case when $X\not=\{ \mathfrak{v}\}$. Then $|N_{\Gamma^{(n)\star}(-u_n)}(X)|\geq 1$ and 
by the assumption 
$|N_{\Gamma^{(n)\star}(-u_n)}(X)|<\frac{1}{d}|X|$, we have $|X|>d$. 

Let $U_X:=\{u\in U: v_u\in X\}$ in $\Gamma^{(n)}(-u_n)$. 
Applying the fact that $V^{(n)}$ is a subset of the right copy of $U^{(n)}$ we arrive at two possibilities:
\begin{enumerate}[(i)]
\item $v_{u_n}\notin X$ and  $|X|=|U_X|$;
\item $v_{u_n}\in X$ and $|X|=|U_X|+1$.
\end{enumerate} 
We claim that in either case the inequality (\ref{0}) follows from Lemma \ref{neighsize2}.

\begin{figure}[h]
  \centering
    \includegraphics[width=0.7\textwidth]{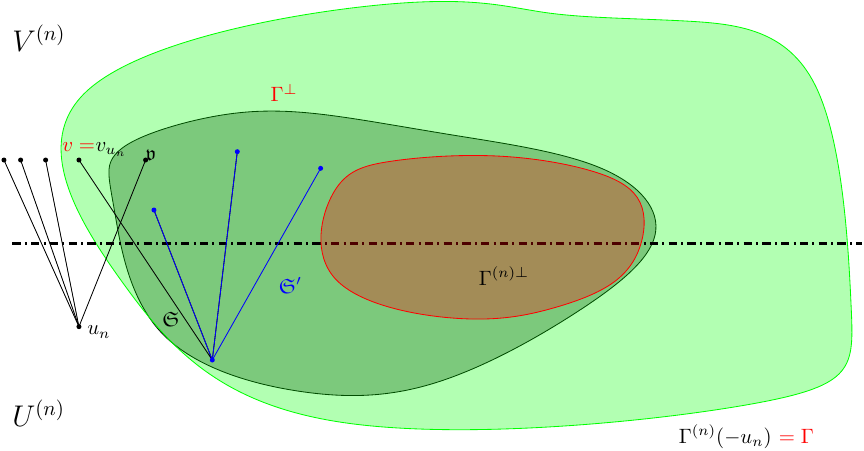}
	\caption{Application of Lemma \ref{neighsize2}; red letters correspond to the  notation of Lemma \ref{neighsize2}.}
\end{figure}

Indeed, let $\mathfrak{S}$ denote the fan from the matching $\mathfrak{M}$ containing $v_{u_n}$ and $\mathfrak{S}'$ denote $\mathfrak{S}$ with $v_{u_n}$ removed.
The conditions of Lemma \ref{neighsize2} are satisfied if we consider $\Gamma^{(n)}(-u_n)$ as $\Gamma$ in that lemma, $v_{u_n}$ as $v$, and 
$\Gamma^{(n)\perp}\cup\{\mathfrak{v}\}\cup \mathfrak{S}'$ as $\Gamma^{\perp}$. 
Indeed, $\Gamma^{(n)}(-u_n,-v_{u_n})$ is $U^{(n)}(-u_n,-v_{u_n})$-reflected. 
Moreover, the corresponding graph $\Gamma^{\star}$ from Lemma \ref{neighsize2} is obtained by removal of $\Gamma^{(n)\perp}\cup\{\mathfrak{v}\}\cup\mathfrak{S}'$ from $\Gamma^{(n)}(-u_n,-v_{u_n})$, therefore it is the same as $\Gamma^{(n)\star}(-u_n,-\mathfrak{v})$ with $\mathfrak{S}$ removed. 
Since $\mathfrak{S}$ is a fan from a perfect $(1,d)$-matching in $\Gamma^{(n)\star}(-u_n,-\mathfrak{v})$, we know that $\Gamma^{\star}$ satisfies Hall's $d$-harem condition.

Therefore in case (i), by Lemma \ref{neighsize2}:
$$|N_{\Gamma^{(n)}(-u_n)}(X)|\geq (d-1)|X|-1.$$
This fact combined with inequalities $d\geq 2$ and $|X|>d$ implies that 
$$|N_{\Gamma^{(n)}(-u_n)}(X)|\geq\frac{1}{d}|X|.$$
In case (ii), by Lemma \ref{neighsize2}:
$$|N_{\Gamma^{(n)}(-u_n)}(X\setminus \{v_{u_n}\})|\geq (d-1)(|X|-1)-1, 
$$ 
$$
\mbox{ i.e. } \, |N_{\Gamma^{(n)}(-u_n)}(X)| \geq (d-1)(|X|-1)-1.
$$
Note that the inequality is strict. 
Indeed, by Lemma \ref{neighsize2} the equality 
$$
|N_{\Gamma^{(n)}(-u_n)}(X\setminus \{v_{u_n}\})|=(d-1)(|X|-1)-1
$$
implies that 
$v_{u_n}\in N_{\Gamma^{(n)}(-u_n)}(U_{X\setminus \{v_{u_n}\}})$ and $N_{\Gamma^{(n)\star}(-u_n,-\mathfrak{v})}(U_{X\setminus \{v_{u_n}\}})=\emptyset$.
Thus, $v_{u_n}\in V^{(n)\perp}$. 
On the other hand, $v_{u_n}\in X\subset \Gamma^{(n)\star}$, a contradiction with the choice of $X$.

Again, inequalities $d\geq 2$ and $|X|>d$ imply 
$$|N_{\Gamma^{(n)}(-u_n)}(X)|\geq(d-1)(|X|-1)\geq\frac{1}{d}|X|.$$ 
This finishes our argument for the statement that the assumption 
$|N_{\Gamma^{(n)\star}(-u_n)}(X)|<\frac{1}{d}|X|$ implies 
$$
N_{\Gamma^{(n)}(-u_n)}(X)\cap U^{(n)\perp}(-u_n)\neq\emptyset , 
$$
i.e. there exists some $u_j^{\perp}\in N_{\Gamma^{(n)}(-u_n)}(X)$. 
Let us denote by $v^{\perp}_{j,1},\ldots, v^{\perp}_{j,d-1}$ 
the $(d-1)$-tuple of vertices adjacent to $u_j^{\perp}$ in $\Gamma^{(n)\perp}$. \\ 
$\bullet$ {\em Step 2. $\Gamma^{(n)\star}(-u_n,+u_j^{\perp})$ satisfies Hall's $d$-harem condition.} \\ 
We start with an application of Lemma \ref{access}.
Consider the induced subgraph of the graph $\Gamma^{(n)\star}(-u_n,-\mathfrak{v})$ that consists of all vertices adjacent to the edges of the matching $\mathfrak{M}^1_n$. 
We take it as the graph $\Gamma^{\star}$ from Lemma \ref{access}. 
Since $\mathfrak{M}^1_n$ is a perfect $(1,d)$-matching in the graph $\Gamma^{(n)\star}$, it follows that $\Gamma^{\star}$ satisfies Hall's $d$-harem condition. 
The matching $\mathfrak{M}^1_n$ plays the role of $M$ from that lemma and the bipartite graph $\Gamma^{(n)}(-u_n)$ plays the role of $\Gamma$. 
Apply Lemma \ref{access} for $\mathfrak{v}$, $X$, $u^{\perp}_j$.   
We see $u^{\perp}_j \xhookdoubleheadrightarrow{\mathfrak{M}^1_n,X} \mathfrak{v}$. 
This gives us sequences of vertices $\{v_0',\ldots,v'_n\}, \{u_0',\ldots,u'_{n-1}\}$ as in Definition \ref{acc}.

In order to prove that the graph $\Gamma^{(n)\star}(-u_n,+u_j^{\perp})$ satisfies Hall's $d$-harem condition, we construct a perfect $(1,d)$-matching in it. 
We set
$$
M':=(\mathfrak{M}^1_n\setminus\{(u_n,v^0_{n,1}),\ldots, (u_n,v^0_{n,d}),(u'_{0},v'_{0}),\ldots (u'_{n-1},v'_{n-1})\}) \cup
$$ 
$$
\{(u^{\perp}_j,v^{\perp}_{j,1}),\ldots,(u^{\perp}_j,v^{\perp}_{j,d-1}),(u^{\perp}_j,v'_{0}), (u'_{0},v'_{1}),\ldots (u'_{n-1},v'_{n})\},
$$ 
where $v'_n=\mathfrak{v} \in \{ v^0_{n,1},\ldots, v^0_{n,d}\}$.

\begin{figure}[h]
  \centering
    \includegraphics[width=0.7\textwidth]{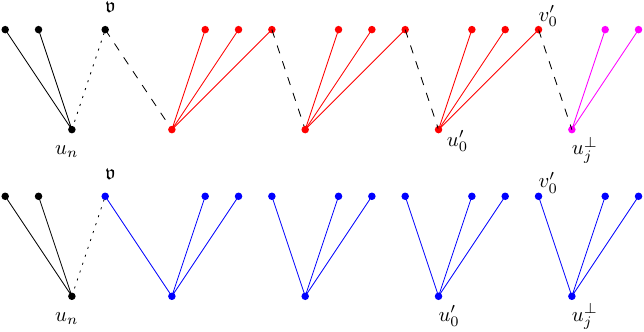}
	\caption{We replace the red fans in the matching $\mathfrak{M}^1_n$ by the blue fans to obtain the matching $M'$ in $\Gamma^{(n)\star}(-u_n,+u_j^{\perp})$.}
\end{figure}

We remind the reader that $\mathfrak{M}^1_n$ is a perfect $(1,d)$-matching in the graph $\Gamma^{(n)\star}$. 
We have obtained $M'$ by removing $d$ edges adjacent to $u_n$, adding $d$ edges incident to $u^{\perp}_j$, and the following replacement: for each of $u'_i$ we replace one edge incident to it by another incident edge (then $\mathfrak{v}$ becomes adjacent to one edge in $M'$).
It follows that the matching $M'$ is a perfect $(1,d)$-matching in the graph $\Gamma^{(n)\star}(-u_n,+u_j^{\perp})$. Therefore that graph satisfies Hall's $d$-harem condition.
\end{proof}

\subsection{Notation used in proof of Lemma \ref{2ndsim}}
Before stating the second lemma, we remind the reader the notation used in it.
\begin{itemize}
\item In Case 3A
\begin{itemize}
\item $\mathfrak{U}:=U^{(n+1)}\setminus U^{(n)\perp}$
\item $\mathfrak{V}:=V^{(n+1)}\setminus V^{(n)\perp}$
\item $\mathfrak{T}:=(\mathfrak{U},\mathfrak{V},\mathfrak{E})$, where $\mathfrak{E}$ is induced in $\Gamma$ by the sets of vertices $\mathfrak{U},\mathfrak{V}$. 
\item $\dot{U}^{(n)\perp}=U^{(n)\perp}\cap U^{(n+1)}$;\item $\dot{V}^{(n)\perp}=V^{(n)\perp}\cap V^{(n+1)}$
\item $\dot{\Gamma}^{(n)\perp}:=(\dot{U}^{(n)\perp},\dot{V}^{(n)\perp},\dot{E}^{(n)\perp})$, where $\dot{E}^{(n)\perp}$ is defined according to the set of fans of elements from $\dot{U}^{(n)\perp}$ putted into $\dot{\Gamma}^{(n)\perp}$.
\end{itemize}
\item In Case 3B
\begin{itemize}
\item $\mathfrak{U}:=U^{(n+1)}\setminus (U^{(n)\perp}\cup \{\dot{u}_n^{\perp}\})$ 
\item $\mathfrak{V}:=V^{(n+1)}\setminus (V^{(n)\perp}\cup \{\dot{v}_{n,i}^{\perp}: 1\leq i\leq d-1\})$.
\item $\mathfrak{T}:=(\mathfrak{U},\mathfrak{V},\mathfrak{E})$, where $\mathfrak{E}$ is induced in $\Gamma$ by the sets of vertices $\mathfrak{U},\mathfrak{V}$.\item $\dot{U}^{(n)\perp}:= (U^{(n)\perp}\cup\{\dot{u}_n^{\perp}\})\cap U^{(n+1)}$ with $\dot{V}^{(n)\perp}$ defined accordingly: $\dot{V}^{(n)\perp}:=(V^{(n)\perp}\cup \{\dot{v}_{n,k}^{\perp}:1\leq k \leq d-1\})\cap V^{(n+1)}$; 
\item $\dot{\Gamma}^{(n)\perp}:=(\dot{U}^{(n)\perp},\dot{V}^{(n)\perp},\dot{E}^{(n)\perp})$, where $\dot{E}^{(n)\perp}$ is defined according to the set of fans of elements from $\dot{U}^{(n)\perp}$ putted into $\dot{\Gamma}^{(n)\perp}$.
\end{itemize}
\item $\mathfrak{M}^2_n$ is a perfect $(1,d)$-matching in the graph $\Gamma^{(n)\star}(-u_n)$; 
\item $\dot{v}^{l+1}_{n,i}$, $1 \le i \le d$ (or $d-1$), are vertices adjacent to $u^{l+1}_n$ in $\mathfrak{M}^2_n$ (or in $\Gamma^{(n)\perp}$);
\item $\dot{v}_1,\dot{v}_2$ are the elements among $\dot{v}^{l+1}_{n,i}$, $1 \le i \le d$ (or $d-1$), which are not added to $M^{l+1}_n$; there are at most two of them. 

\end{itemize}

\subsection{Claims  \ref{cs1p2part1}, \ref{cs1p2}}

\begin{lm}\label{2ndsim}
For any $n$ one of the following holds:
\begin{itemize}
\item $\mathfrak{T}$ satisfies Hall's $d$-harem condition;
\item there exist some vertex $u_j^{\perp}\in \dot{U}^{(n)\perp}$ such that the graph $\mathfrak{T}(+u_j^{\perp})$ satisfies Hall's $d$-harem condition.
\item there exists vertices $u_i^{\perp},u_j^{\perp}\in \dot{U}^{(n)\perp}$ such that the graph $\mathfrak{T}(+u_i^{\perp},+u_j^{\perp})$ satisfies Hall's $d$-harem condition.
\end{itemize}
\end{lm}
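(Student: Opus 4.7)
The plan is to parallel the proof of Lemma \ref{1stsim} while allowing for up to two augmenting corrections instead of one. First I dispose of the easy situations. In cases (1) and (2) of part 2 of step $n+1$, only vertices of $\Gamma^{(n)\perp}$ were added to $M_n$, so $\mathfrak{T}$ coincides with $\Gamma^{(n)\star}(-u_n)$, which satisfies Hall's $d$-harem condition by Claim \ref{csnp1}; the first bullet of the lemma holds. In case (3)A, $\mathfrak{T}$ is obtained from $\Gamma^{(n)\star}(-u_n)$ by deleting $u^{l+1}_n$ together with $d-1$ of its neighbors in $\mathfrak{M}^2_n$, leaving at most one leftover vertex $\dot{v}_1$ on the $V$-side. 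This is structurally identical to the deletion of $u_n$ treated in Lemma \ref{1stsim}, with $u^{l+1}_n$ and $\mathfrak{M}^2_n$ replacing $u_n$ and $\mathfrak{M}^1_n$, so the very same argument (finding at most one $u_j^{\perp}\in\dot{U}^{(n)\perp}$ to add back) yields the first or second bullet.

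The substantive case is (3)B. Here $\mathfrak{T}$ arises from $\Gamma^{(n)\star}(-u_n)$ by additionally removing $\dot{u}^{\perp}_n$ and $d-1$ leaves of its $\mathfrak{M}^2_n$-fan (which are placed into $\dot{\Gamma}^{(n)\perp}$ as $\dot{v}^{\perp}_{n,k}$). Consequently, at most two vertices $\dot{v}_1,\dot{v}_2$ remain as unmatched leftovers on the $V$-side of $\mathfrak{T}$. Assume $\mathfrak{T}$ fails Hall's $d$-harem condition and choose a minimal connected $X\subset \mathfrak{V}$ witnessing the failure, i.e., $|N_{\mathfrak{T}}(X)|<\tfrac{1}{d}|X|$. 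An application of Lemma \ref{neighsize2} to $\Gamma^{(n)}(-u_n,-u^{l+1}_n)$ with $\dot{\Gamma}^{(n)\perp}$ together with the removed $\mathfrak{M}^2_n$-fans absorbed into the perpendicular part produces a bound of the form $|N_{\Gamma^{(n)}(-u_n)}(X)|\geq (d-1)|X|-c$, where $c\in\{1,2\}$ measures how many of $\dot{v}_1,\dot{v}_2$ lie in $X$. Combined with the deficiency $|N_{\mathfrak{T}}(X)|<\tfrac{1}{d}|X|$, this forces $X$ to contain at least one leftover and $N_{\Gamma^{(n)}(-u_n)}(X)$ to meet $\dot{U}^{(n)\perp}$.

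If $X$ contains only one of $\dot{v}_1,\dot{v}_2$, the argument of Lemma \ref{1stsim} transfers essentially verbatim: pick $u^{\perp}_j\in N_{\Gamma^{(n)}(-u_n)}(X)\cap \dot{U}^{(n)\perp}$, invoke Lemma \ref{access} to obtain an $\mathfrak{M}^2_n$-alternating path from $u^{\perp}_j$ through $X$ to that leftover, and perform edge-swap surgery along the path to turn $\mathfrak{M}^2_n$ into a perfect $(1,d)$-matching in $\mathfrak{T}(+u^{\perp}_j)$, yielding the second bullet. If $X$ contains both $\dot{v}_1$ and $\dot{v}_2$, one alternating path resolves only one of them; we then iterate by extracting a second path from some $u^{\perp}_i\in \dot{U}^{(n)\perp}$ to the remaining leftover, chosen either in the matching updated by the first swap or, equivalently, vertex-disjoint from the first by minimality of $X$. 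Combining the two swaps produces a perfect $(1,d)$-matching in $\mathfrak{T}(+u^{\perp}_i,+u^{\perp}_j)$, giving the third bullet.

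The hard part is the bookkeeping in case (3)B: to apply Lemma \ref{neighsize2} one must choose the ambient graph and perpendicular part so that the removed matching fans of $u^{l+1}_n$ and $\dot{u}^{\perp}_n$ are absorbed cleanly, analogously to the fan $\mathfrak{S}$ in the proof of Lemma \ref{1stsim}. Moreover, when both leftovers lie in $X$ one must verify that the two alternating paths can be chosen so that the edge swaps do not conflict; this is where the minimality of $X$ and a careful reapplication of Lemma \ref{access} inside the matching obtained after the first swap are essential. The remainder of the argument parallels Lemma \ref{1stsim} step by step.
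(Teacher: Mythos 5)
Your high-level strategy (run the augmenting-path argument of Lemma \ref{1stsim} up to twice, once per leftover vertex) is the same as the paper's, and your treatment of cases (1), (2) and the one-leftover situation is fine. But in the two-leftover subcase of (3)B your argument has a genuine gap. You fix a single minimal connected witness $X$ for the failure of $\mathfrak{T}$ and allow it to contain both $\dot{v}_1$ and $\dot{v}_2$; neither Lemma \ref{neighsize2} nor Lemma \ref{access} applies in that configuration as stated. Lemma \ref{neighsize2} removes exactly one vertex $v$ and yields the defect $-1$; the ``$-c$ with $c\in\{1,2\}$'' version you invoke is not proved anywhere, and the counting in Lemma \ref{access} ($|U_M(X')|\geq\lceil l/d\rceil$) breaks if $X'$ contains two vertices unmatched by $M$. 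Moreover, your fallback claim that the two alternating paths can be chosen vertex-disjoint ``by minimality of $X$'' is unjustified: minimality controls neighbourhood sizes, not disjointness of alternating paths, and two swaps along intersecting paths need not compose into a matching.

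The paper avoids both problems by sequentializing. It first shows (Claim \ref{gn0}) that $\mathfrak{T}(-\dot{v}_1,-\dot{v}_2)$ satisfies the condition, and (Claim \ref{gn1}) that $\Gamma^{(n+1)}$ satisfies the $V$-side inequality $|N_{\Gamma^{(n+1)}}(X)|\geq\frac{1}{d}|X|$ --- the latter via Lemma \ref{neighsize} applied to the fully $U^{(n+1)}$-reflected graph $\Gamma^{(n+1)}$ with $\dot{v}_1,\dot{v}_2$ absorbed into the perpendicular part, so no two-defect analogue of Lemma \ref{neighsize2} is ever needed. Then Part 1 works entirely inside $\mathfrak{T}(-\dot{v}_2)$, where $\dot{v}_1$ is the unique exceptional vertex, so Lemma \ref{access} applies verbatim and produces an updated perfect matching $M'$ on $\mathfrak{T}(+u_j^{\perp},-\dot{v}_2)$ (or $\mathfrak{T}(-\dot{v}_2)$). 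Part 2 then chooses a \emph{new} minimal witness for $\dot{v}_2$ in the updated graph $\Gamma'$ and applies Lemma \ref{access} with the matching $M'$, not with $\mathfrak{M}^2_n$; this is how the interaction between the two swaps is handled. To repair your proof you would need to adopt this two-stage structure (or prove two-defect versions of Lemmas \ref{neighsize2} and \ref{access} and a disjointness statement for the paths), rather than working with one witness set for $\mathfrak{T}$ containing both leftovers.
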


\begin{rem}
If $|\dot{U}^{(n)\perp}|\leq 1$, then Lemma \ref{2ndsim} can be restated as follows. 
One of	the following holds:
\begin{itemize}
\item $\mathfrak{T}$ satisfies Hall's $d$-harem condition;
\item $\Gamma^{(n+1)}$ satisfies Hall's $d$-harem condition.
\end{itemize}
Therefore, this lemma proves Claim \ref{cs1p2part1}.
\end{rem}

\begin{proof}[Proof of Lemma \ref{2ndsim}]
Assume that $\mathfrak{T}$ does not satisfy Hall's $d$-harem condition. 
Let $u^{l+1}_n$ be the root of the last fan added to the matching $M_n$ in the second part of the $n$-th step; it belongs to the produced cycle of length $2$.
Recall that $\dot{v}_1,\dot{v}_2$ denote the vertices of the form $\dot{v}^{l+1}_{n,i}$ that were not added to $M^{l+1}_n$.
In particular in Case 3A there is only one such a vertex and in Case 3B there is either one or two such vertices.

In the rest of the proof we will use the following two claims.

\begin{clm}\label{gn0}
Depending on the existence of $\dot{v}_2$ either the graph $\mathfrak{T}(-\dot{v}_1)$ or $\mathfrak{T}(-\dot{v}_1,-\dot{v}_2)$ satisfies Hall's $d$-harem condition.
\end{clm}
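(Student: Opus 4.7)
The plan is to produce an explicit perfect $(1,d)$-matching on $\mathfrak{T}(-\dot{v}_1)$ (or on $\mathfrak{T}(-\dot{v}_1,-\dot{v}_2)$ when $\dot{v}_2$ exists), which then trivially gives Hall's $d$-harem condition. Since $\mathfrak{M}^2_n$ is by construction a perfect $(1,d)$-matching on $\Gamma^{(n)\star}(-u_n)$, the natural candidate is simply the restriction of $\mathfrak{M}^2_n$ to the graph in question. So the content of the argument reduces to showing that the vertex set of $\mathfrak{T}(-\dot{v}_1)$ (resp.\ $\mathfrak{T}(-\dot{v}_1,-\dot{v}_2)$) differs from that of $\Gamma^{(n)\star}(-u_n)$ by the deletion of exactly one (resp.\ two) complete $\mathfrak{M}^2_n$-fans.

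I would split the verification into the three sub-cases of part 3 of step $n+1$. In Case 3A the disrupted $\mathfrak{M}^2_n$-fan is the one rooted at $u^{l+1}_n$: the root is deleted by passing to $U^{(n+1)}$, the $d-1$ leaves absorbed into $M^{l+1}_n$ are deleted by passing to $V^{(n+1)}$, and the one remaining leaf is $\dot{v}_1$, killed by the additional $(-\dot{v}_1)$. In Case 3B with $u^{l+1}_n \notin U^{(n)\perp}$ two $\mathfrak{M}^2_n$-fans are disrupted, namely those rooted at $u^{l+1}_n$ and at $\dot{u}_n^\perp$; counting the $2d$ leaves against the $d-2$ absorbed into $M^{l+1}_n$, the vertex $v_{u^l_n}$, and the $d-1$ renamings $v_j\mapsto\dot{v}^\perp_{n,i}$ leaves exactly two unaccounted leaves, which are $\dot{v}_1$ and $\dot{v}_2$. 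In Case 3B with $u^{l+1}_n = u^\perp_j$ the vertex $u^{l+1}_n$ is a perp root and the fan used at $u^{l+1}_n$ in $M^{l+1}_n$ is a perp fan rather than an $\mathfrak{M}^2_n$-fan; only the fan of $\dot{u}_n^\perp$ is disrupted inside $\mathfrak{M}^2_n$, and its $d$ leaves are entirely accounted for by $v_{u^l_n}$ together with the $d-1$ renamings $\dot{v}^\perp_{n,i}$. The surviving perp leaf $\dot{v}^{l+1}_{n,d-1}$, which the definition of $\mathfrak{V}$ explicitly reinjects into $\mathfrak{T}$, coincides with $\dot{v}_1$ and is killed by the extra $(-\dot{v}_1)$.

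I expect the main obstacle to be bookkeeping rather than genuine mathematics: one has to chase through the layered unions and differences defining $\mathfrak{U}$, $\mathfrak{V}$, $V^{(n+1)}$, and the post-update $V^{(n)\perp}$, and keep track of the various relabelings carried out in part 3 (most delicately $v_j\mapsto\dot{v}^\perp_{n,i}$ and the reinjection of $\dot{v}^{l+1}_{n,d-1}$ in the $u^{l+1}_n = u^\perp_j$ subcase). Once the membership accounting is verified, the restriction of $\mathfrak{M}^2_n$ is automatically a perfect $(1,d)$-matching on the desired graph, which settles the claim.
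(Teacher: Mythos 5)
Your proposal is correct and follows essentially the same route as the paper: the paper likewise observes that $\mathfrak{T}(-\dot{v}_1)$ (resp.\ $\mathfrak{T}(-\dot{v}_1,-\dot{v}_2)$) is obtained from $\Gamma^{(n)\star}(-u_n)$ by deleting one (resp.\ two) complete fans of $\mathfrak{M}^2_n$ --- those rooted at $u^{l+1}_n$ and $\dot{u}^{\perp}_n$ --- so the restriction of $\mathfrak{M}^2_n$ is a perfect $(1,d)$-matching. The only difference is that the paper states the resulting vertex-set identities as following ``from the procedure,'' whereas you carry out the case-by-case bookkeeping explicitly (and correctly, including the reinjected leaf $\dot{v}^{l+1}_{n,d-1}$ in the subcase $u^{l+1}_n=u^{\perp}_j$).
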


\begin{proof}
We only consider the case when $\dot{v}_2$ exists, the other one is analogous. 

It follows from the procedure that:
$$
\mathfrak{U}(-\dot{v}_1,-\dot{v}_2)=U^{(n)\star}(-u_n)\setminus\{u^{l+1}_n,\dot{u}^{\perp}_n\} 
$$
and 
$$
\mathfrak{V}(-\dot{v}_1,-\dot{v}_2)=V^{(n)\star}(-u_n)\setminus\{\dot{v}^{l+1}_{n,1},\ldots, \dot{v}^{l+1}_{n,d},\dot{v}^{\perp}_{n,1},\ldots, \dot{v}^{\perp}_{n,d}\}.
$$ 
Since $\mathfrak{M}^2_n$ is a perfect $(1,d)$-matching in $\Gamma^{(n)\star}(-u_n)$ and $\mathfrak{T}(-\dot{v}_1,-\dot{v}_2)$ is obtained from $\Gamma^{(n)\star}(-u_n)$ by removal of two fans from $\mathfrak{M}^2_n$, we know that $\mathfrak{T}(-\dot{v}_1,-\dot{v}_2)$ satisfies Hall's $d$-harem condition. 
\end{proof}

\begin{clm}\label{gn1}
For any $X\subset V^{(n+1)}$, we have
$$|N_{\Gamma^{(n+1)}}(X)|\geq\frac{1}{d}|X|.$$
\end{clm}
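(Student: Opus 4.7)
The plan is to apply Lemma \ref{neighsize} directly to $\Gamma^{(n+1)}$. Recall from the output of step $n+1$ that $\Gamma^{(n+1)}$ is $U^{(n+1)}$-reflected, which supplies the structural hypothesis of that lemma. The remaining task is to choose a partition $(U^{\perp},V^{\perp})$ versus $(U^{\star},V^{\star})$ inside $\Gamma^{(n+1)}$ so that (a) the induced $\Gamma^{\star}$ satisfies Hall's $d$-harem condition and (b) every $Y\subseteq U^{\perp}$ satisfies $|N_{\Gamma^{(n+1)}}(Y)\cap V^{\perp}|\geq (d-1)|Y|$.

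I would take $U^{\perp}:=\dot{U}^{(n)\perp}$ and enlarge $V^{\perp}$ from $\dot{V}^{(n)\perp}$ by the at-most-two extra vertices $\dot{v}_1$ (and $\dot{v}_2$ when it exists). The induced subgraph on the complementary sets of vertices is then exactly $\mathfrak{T}(-\dot{v}_1)$ or $\mathfrak{T}(-\dot{v}_1,-\dot{v}_2)$, which satisfies Hall's $d$-harem condition by Claim \ref{gn0}; so hypothesis (a) is met. For (b), each root in $\dot{U}^{(n)\perp}$ is, by construction, incident to a private fan of $d-1$ leaves in $\dot{V}^{(n)\perp}$, and enlarging $V^{\perp}$ by $\dot{v}_1,\dot{v}_2$ can only increase the left-hand side of the inequality in (b). So (b) is automatic.

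With both hypotheses verified, Lemma \ref{neighsize} yields $|N_{\Gamma^{(n+1)}}(X)|\geq (d-1)|X|$ for every $X\subseteq V^{(n+1)}$. Since $d\geq 2$, we have $d-1\geq 1\geq \frac{1}{d}$, so the desired inequality $|N_{\Gamma^{(n+1)}}(X)|\geq \frac{1}{d}|X|$ follows immediately.

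The main obstacle I foresee is the bookkeeping that guarantees the induced subgraph on $U^{(n+1)}\setminus \dot{U}^{(n)\perp}$ and $V^{(n+1)}\setminus(\dot{V}^{(n)\perp}\cup\{\dot{v}_1,\dot{v}_2\})$ really coincides with the graph delivered by Claim \ref{gn0}. This requires a short case split between Case (3)A (where only $\dot{v}_1$ occurs and $\dot{U}^{(n)\perp}=U^{(n)\perp}\cap U^{(n+1)}$) and Case (3)B (where $\dot{u}_n^{\perp}$ joins $\dot{U}^{(n)\perp}$ and, if $u^{l+1}_n$ coincided with some $u_j^{\perp}$, the fan of $u_j^{\perp}$ is truncated). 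In the truncated-fan subcase one must verify that the leftover vertex $\dot{v}^{l+1}_{n,d-1}$ was moved back into $\mathfrak{V}$ by the definition in Section \ref{otpt}, which preserves the full $(d-1)$-leaf structure for every root remaining in $\dot{U}^{(n)\perp}$ and thereby keeps condition (b) intact.
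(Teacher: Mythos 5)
Your proposal is correct and follows essentially the same route as the paper: the paper also applies Lemma \ref{neighsize} to $\Gamma=\Gamma^{(n+1)}$ with $\Gamma^{\perp}=\dot{\Gamma}^{(n)\perp}(+\dot{v}_1)$ or $\dot{\Gamma}^{(n)\perp}(+\dot{v}_1,+\dot{v}_2)$, identifies the complementary induced subgraph with $\mathfrak{T}(-\dot{v}_1)$ or $\mathfrak{T}(-\dot{v}_1,-\dot{v}_2)$, and invokes Claim \ref{gn0}. Your explicit verification of the $(d-1)$-leaf condition for subsets of $\dot{U}^{(n)\perp}$, including the truncated-fan subcase of (3)B, is a detail the paper leaves implicit, but it matches the construction in Section \ref{otpt}.
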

\begin{proof}
The inequality follows from Lemma \ref{neighsize}.
Indeed, consider $\Gamma^{(n+1)}$ to be $\Gamma$ from that lemma. 
Note that the construction guarantees that 
$\Gamma^{(n+1)}$ is $U^{(n+1)}$-reflected. 
Depending on existence of $\dot{v}_2$ consider either $\dot{\Gamma}^{(n)\perp}(+\dot{v}_1)$ or $\dot{\Gamma}^{(n)\perp}(+\dot{v}_1,+\dot{v}_2)$ to be $\Gamma^{\perp}$ from that lemma. 
Then the corresponding graph $\Gamma^{\star}$ from the lemma is equal to either $\mathfrak{T}(-\dot{v}_1)$ or $\mathfrak{T}(-\dot{v}_1,-\dot{v}_2)$ and by Claim \ref{gn0} it satisfies Hall's $d$-harem condition. 
Therefore, the conditions of Lemma \ref{neighsize} are satisfied.
\end{proof}

We now return to the main course of the proof. From now on we consider the case of two additional vertices: $\dot{v}_1$ and $\dot{v}_2$. 
The case of only one of them, $\dot{v}_1$, is similar and simpler (in particular, we do not need two parts in the further proof). 

It is clear that the inequality $|N_{\mathfrak{T}}(X)|\geq |N_{\mathfrak{T}(-\dot{v}_1,-\dot{v}_2)}(X)|$ holds for all $X\subset \mathfrak{U}$.
This inequality also holds for subsets of $\mathfrak{V}$ which do not intersect $\{ \dot{v}_1,\dot{v}_2\}$. 
On the other hand, applying Claim \ref{gn0} we see that $\mathfrak{T}(-\dot{v}_1,-\dot{v}_2)$ satisfies Hall's $d$-harem condition.  
Therefore, if $\mathfrak{T}$ does not satisfy Hall's $d$-harem condition, then it would be  witnessed by some finite subset of $\mathfrak{V}$ containing at least one of $\dot{v}_1,\dot{v}_2$.
 
The rest of the proof is divided into two parts.

\noindent 
$\bullet$ \textit{Part 1.} In this part of the proof we define a graph $\Gamma'$, a matching $M'$ and a family of fans $\dot{\Gamma}^{(n+1)\perp}$.
We start with checking whether the graph $\mathfrak{T}(-\dot{v}_2)$ satisfies Hall's $d$-harem condition. 
If it does, we set \begin{itemize}
\item $M'$ is a perfect $(1,d)$-matching in $\mathfrak{T}(-\dot{v}_2)$;
\item $\dot{\Gamma}^{(n+1)\perp}:=\dot{\Gamma}^{(n)\perp}$;
\item $\Gamma'=\mathfrak{T}$, denoting $\Gamma'=(U',V')$,  
\end{itemize} 
and finish Part 1 of the proof.

If it does not, then there exists a minimal connected set $X$ such that $\dot{v}_1\in X\subset \mathfrak{V}(-\dot{v}_2)$ and $|N_{\mathfrak{T}(-\dot{v}_2)}(X)|<\frac{1}{d}|X|$. 
Observe that $|N_{\Gamma^{(n+1)}}(X)|\geq\frac{1}{d}|X|$ by Claim \ref{gn1}.

The inequalities 
$$
|N_{\Gamma^{(n+1)}}(X)|\geq\frac{1}{d}|X| \, \mbox{ and } 
\, |N_{\mathfrak{T}(-\dot{v}_2)}(X)|<\frac{1}{d}|X|
$$
imply 
$$ 
N_{\Gamma^{(n+1)}}(X)\cap \dot{U}^{(n)\perp}\neq\emptyset,
$$
i.e. there exists some $u^{\perp}_j\in N_{\Gamma^{(n+1)}}(X)$. Similarly as in Lemma \ref{1stsim} we denote by $v^{\perp}_{j,1},\ldots, v^{\perp}_{j,d-1}$ the remaining vertices of the fan from $\dot{\Gamma}^{(n)\perp}$ containing $u^{\perp}_j$.

Since $\mathfrak{M}^2_n$ is a perfect $(1,d)$-matching in the graph $\mathfrak{T}(-\dot{v}_1,-\dot{v}_2)$, which in turn is a subgraph of the bipartite graph $\Gamma^{(n+1)}$, we can use Lemma \ref{access} for $u^{\perp}_j, X,\dot{v}_1$ and arrive at $u^{\perp}_j \xhookdoubleheadrightarrow{\mathfrak{M}^2_n,X} \dot{v}_1$. 
This gives us sequences $\{v_0',\ldots,v'_n\}, \{u_0',\ldots,u'_{n-1}\}$ as in Definition \ref{acc}. 
We now apply an argument similar to one from the proof of Lemma \ref{1stsim}. 
We set

\begin{align*}
M':= (\mathfrak{M}^2_n\setminus\{(u^{l+1}_n,\dot{v}^{l+1}_{n,1}),\ldots, (u^{l+1}_n,\dot{v}^{l+1}_{n,d}),(u'_{0},v'_{0}),\ldots ,(u'_{n-1},v'_{n-1})\} )\cup\\ \{(u^{\perp}_j,v^{\perp}_{j,1}),\ldots, (u^{\perp}_j,v^{\perp}_{j,d-1}), (u^{\perp}_j,v'_{0}), (u'_{0},v'_{1}),\ldots, (u'_{n-1},v'_{n})\}, 
\end{align*} 
where $v'_{n} =\dot{v}_1$. 
Observe that 
$$
(\mathfrak{U}\cup\{u_j^{\perp}\},(\mathfrak{V}\cup\{v_{j,1}^{\perp},\ldots, v_{j,d-1}^{\perp}\})\setminus \{\dot{v}_2\})=\mathfrak{T}(+u_j^{\perp})\setminus \{\dot{v}_2\}=\mathfrak{T}(+u_j^{\perp},-\dot{v}_2).$$ 
We remind the reader that $\mathfrak{M}^2_n$ is a perfect $(1,d)$-matching in the graph $\Gamma^{(n)\star}(-u_n)$. 
We have obtained $M'$ from $\mathfrak{M}^2_n$ by removing $d$ edges incident to $u^{l+1}_n$, adding $d$ edges incident to $u^{\perp}_j$, and the following replacement: 
for each $u'_i$ we replace one edge incident to it by another incident edge.
It follows that the matching $M'$ is a perfect $(1,d)$-matching in the graph $\mathfrak{T}(+u_j^{\perp},-\dot{v}_2)$. Therefore that graph satisfies Hall's $d$-harem condition. 

We define $\Gamma':=\mathfrak{T}(+u_j^{\perp})$, denote $\Gamma' = (U',V')$ and put 
$$\dot{\Gamma}^{(n+1)\perp}=(\dot{U}^{(n)\perp}\setminus \{u_j^{\perp}\},\dot{V}^{(n)\perp}\setminus \{v_{j,k}^{\perp}:1\leq k \leq d-1\}).$$
This ends the first part of the proof.

\noindent 
$\bullet$ \textit{Part 2.}
We check whether the graph $\Gamma'$ satisfies Hall's $d$-harem condition.
If it does, then by Part 1 $\Gamma'$ has to be equal to $\mathfrak{T}(+u_j^{\perp})$ and 
the proof is finished by the second option of the formulation. 
If it does not then repeating the reasoning of Part 1 we see that there exists a minimal connected set $X$ such that $\dot{v}_2\in X\subset V'$ and $|N_{\Gamma'}(X)|<\frac{1}{d}|X|$.  
Again, using Claim \ref{gn1} we obtain
$$N_{\Gamma^{(n+1)}}(X)\cap \dot{U}^{(n)\perp}\neq\emptyset,$$ 
i.e. there exists some $u^{\perp}_i\in N_{\Gamma^{(n+1)}}(X)$. We denote by $v^{\perp}_{i,1},\ldots, v^{\perp}_{i,d-1}$ the vertices adjacent to $u^{\perp}_j$ in $\dot{\Gamma}^{(n+1)\perp}$.

The matching $M'$ obtained in the first part of the proof is a perfect $(1,d)$-matching for either the graph $\mathfrak{T}(-\dot{v}_2)$, or the graph $\mathfrak{T}(+u_j^{\perp},-\dot{v}_2)$. 
Each of them is a subgraph of $\Gamma^{(n+1)}$. 
Therefore we can use Lemma \ref{access} for $u^{\perp}_i, X,\dot{v}_2$. 
We have $u^{\perp}_i \xhookdoubleheadrightarrow{M',X} \dot{v}_2$. 
Again we can apply the argument of Lemma \ref{1stsim} to obtain an appropriate matching: 
$$
M'':= (M'\setminus\{(u'_{0},v'_{0}),\ldots , (u'_{n-1},v'_{n-1}) \} )\cup$$
$$\{(u^{\perp}_i,v^{\perp}_{i,1}),\ldots, (u^{\perp}_i,v^{\perp}_{i,d-1}),(u^{\perp}_i,v'_{0}), (u'_{0},v'_{1}), \ldots,(u'_{n-1},v'_{n})\}.
$$
We have obtained $M''$ from $M'$ by adding $d$ edges incident to $u^{\perp}_i$, and replacing one edge in matching for each of $u'_k$ in such a way, that $\dot{v}_2$ is adjacent to one edge in $M''$.

The final argument depends on two possible outputs of Part 1.  
If the graph $\mathfrak{T}(-\dot{v}_2)$ does not satisfy  Hall's $d$-harem condition (i.e. $u^{\perp}_j$ is involved), then $M''$ is a perfect $(1,d)$-matching in the graph $\mathfrak{T}(+u_i^{\perp},+u_j^{\perp})$. 
If it does, we redefine $u^{\perp}_j:=u^{\perp}_i$ and then $M''$ becomes a perfect $(1,d)$-matching in the graph $\mathfrak{T}(+u_j^{\perp})$. 
Therefore if $\mathfrak{T}$ does not satisfy Hall's $d$-harem condition, then either $\mathfrak{T}(+u_j^{\perp})$ or $\mathfrak{T}(+u_i^{\perp},+u_j^{\perp})$ satisfies this condition.
\end{proof}

\section{Proof of the Main Theorem}

\begin{proof}[Proof of Theorem \ref{hhco}]
Let us apply the construction of Section \ref{simconst}.
This construction works modulo Claims \ref{cs1p1}, \ref{cs1p2part1}, \ref{csnp1}, \ref{cs1p2}. Claims \ref{cs1p1} and \ref{csnp1} follow from Lemma \ref{1stsim}. Claims \ref{cs1p2part1} and \ref{cs1p2} follow from Lemma \ref{2ndsim}.
Since for every $n$ the union $\bigcup\limits_{i=1}^n M_i$ consist of $(1,d-1)$-fans, the final set of edges $M$ is an $(1,d-1)$-matching.

For every $u\in U$ there is a step where an edge incident to $u$ is added to $M$. 
Furthermore, if the copy $v_u$ was not added to $M$ earlier, the construction ensures that $v_u$ is also added to $M$ in the second part of this step. 
This guarantees that $M$ is a perfect $(1,d-1)$-matching of the graph $\Gamma$. 
 
Until the end of the proof we will abuse the notation and use a natural number $n$ both for vertices and numbers of steps.

It remains to show that $f$ has controlled sizes of its cycle. 
To see condition $(i)$ of Definition \ref{cycles} note that the edges $(u_0, v^0_{0,1})$ and $(v_{u_0},u_{v^0_{0,1}})$ are added to $M$ at step 1, i.e. $f^2(u_0)=f(u_{v^0_{0,1}})=u_0$. 
Since $u_0=1$, condition $(i)$ is satisfied.

As a vertex $n$ appears in $M$ at the $n$-th step at latest, the length of a cycle created at the $n$-th step cannot be greater than $\max\{2,n\}$.  
In particular, if $n$ is in a cycle then $f^i(n)=n$ for some $i\leq n$ and condition $(ii)$ of Definition \ref{cycles} is satisfied. 

It remains to show that condition $(iii)$ is satisfied.
Let $\mathsf{f}_i$ be the partial function (living in $\mathbb{N}$) realized by $M_i$. 
Then $M_i = \{(\mathsf{f}_i(m),m):m\in\mathsf{Dom}(\mathsf{f}_i)\}$ is the graph of $\mathsf{f}_i$.  
We denote by $f_n$ the partial function  realized by $\bigcup\limits_{i=0}^{n}M_i$.
Let $F_n = \{(f_n(m),m):m\in\mathsf{Dom} (f_n )\}$ be the graph of $f_n$ on $\mathsf{Dom} (f_n )\cup \mathsf{Rng} (f_n )$. 
Then each $M_i$ with $i\le n$, is a subgraph of $F_n$. 

Since each subgraph $M_i$ is connected,  $F_n$ has at most $n+1$ connected components. 
Furthermore, one of the following properties holds:
\begin{itemize}
\item $M_n$ is a connected component of $F_n$ and contains a cycle;
\item $M_n$ is connected and there is a vertex of degree $1$ in $F_{n-1}$ which is an image of a vertex from $M_n$.  
\end{itemize} 
The first possibility arises in cases (2) and (3) of part 2 of the $(n+1)$-th step. 
The second possibility is the result of case (1) of this part of the step (see Section \ref{nplus1p2} and Remarks before part 2). 
Since each element is included into $M$ together with a fan, this property guarantees that $F_n$ consists only of vertices of degree $1$ and $d$.  
When a vertex has degree $1$, then its $F_n$-neighborhood is exactly its $f_n$-image.
When a vertex has degree $d$, then its $f_n$-image and $d-1$ preimages are in $F_n$.
In particular, each connected component of $F_n$ contains a cycle. 
The length of the cycle is not greater than $\max\{n,2\}$.

Since the value $f_{n-1} (n)$ is defined, $n$ belongs to some connected component of $f_{n-1}$. 
Thus, there exist $k$ and $l$ such that $f_{n-1}^{k+l}(n)=f_{n-1}^k(n)$.
These $k$ and $l$ work for the equality $f^{k+l}(n)=f^k(n)$. 
It remains to show that $k$ can be bounded by $2n$ and $l$ by $n$. 

The latter estimate is easy: the biggest cycle that can be constructed before the $n$-th step, is not longer than $\max\{n-1,2\}$. 
Below we will only use the inequality $l \le n$ for simplicity. 

In order to show that $k$ is bounded by $2n$, let us estimate the size of a subset of $U$ that can be added to the matching $M$ in the process of the $n$-th step of the construction, in the situation when a new cycle is not created. 
It must consist of elements of $U^{(n-1)\perp}$ added to the matching at the iteration of part $2$ of the $n$-th step together with $u_n$.
Therefore, we can bound it by the maximal possible number $|U^{(n-1)\perp}|+1$.

Let us denote the number of elements from $U^{(s-1)\perp}$ added to $M$ at the $s$-th step by $\ell_s$.
Then for each $m\in M_s$ we have $f^{\ell_s +1}(m)\in M_j$ where $j \leq s-1$. 
If no cycle is constructed at the $j$-th step then $f^{\ell_{j}+1}(f^{\ell_{s}+1}(m))\in M_{i}$ for some $i\leq j-1$. 
Iterating this argument and applying it to $m:=n$ we arrive at 
$$
k\leq \sum\limits_{s=1}^n (\ell_s+1).
$$ 
Since at the $n$-th step the value $|\bigcup\limits_{s=1}^{n-1} U^{(s)\perp}|$ does not exceed $n-1$, 
$$
\sum\limits_{s=1}^n \ell_s \le n-1.
$$
We see that $k\leq 2n-1$. 
Thus condition $(iii)$ of Definition \ref{cycles} is satisfied.
\end{proof}

\printbibliography

\end{document}